\definecolor{darkblue}{rgb}{0.0,0,0.7}
\definecolor{darkred}{rgb}{0.68,0,0}
\definecolor{darkgreen}{rgb}{0,.38,0}
\setlist[enumerate]{
	label=\textnormal{({\roman*})},
	ref={\roman*}}
\def\th@plain{%
	\thm@notefont{}
	\itshape 
}
\def\th@definition{%
	\thm@notefont{}
	\normalfont 
}
\newtheorem{thm}{Theorem}[section]
\newtheorem{lemma}[thm]{Lemma}
\newtheorem*{claim*}{Claim}
\newtheorem{prop}[thm]{Proposition}
\theoremstyle{definition}
\newtheorem{rem}[thm]{Remark}
\numberwithin{figure}{section}
\numberwithin{equation}{section}
\def\zz{\mathbb Z}
\def\cc{\mathbb C}
\def\ov{\overline}
\def\ga{\gamma}
\def\al{\alpha}
\def\be{\beta}
\def\cO{\mathcal O}
\def\<{\langle}
\def\>{\rangle}
\def\oa{\overrightarrow}
\def\0{{\mathbf 0}}
\def\.{\hskip.06cm}
\def\ts{\hskip.03cm}
\def\bz{{\textbf{\textit{z}}}}
\def\bx{{\textbf{\textit{x}}}}
\def\by{{\textbf{\textit{y}}}}
\def\bbe{\textbf{\textit{e}}}
\def\bal{{\boldsymbol{\alpha}}}
\def\bbe{{\boldsymbol{\be}}}
\def\bga{{\boldsymbol{\ga}}}
\def\.{\hskip.06cm}
\def\ts{\hskip.03cm}
\newcommand{\textsu}[1]{\textup{\textsf{#1}}}
\newcommand{\CbCla}[1]{\textup{\textsu{#1}}}
\newcommand{\Sigmap}{\ensuremath{\Sigma^{{\textup{p}}}}}
\newcommand{\NP}{\CbCla{NP}}
\newcommand{\BPP}{\CbCla{BPP}}
\newcommand{\coNP}{\CbCla{coNP}}
\newcommand{\E}{\CbCla{E}}
\renewcommand{\P}{\CbCla{P}}
\newcommand{\PH}{\CbCla{PH}}
\newcommand{\PSPACE}{\CbCla{PSPACE}}
\newcommand{\EXPSPACE}{\CbCla{EXPSPACE}}
\newcommand{\AM}{\CbCla{AM}}
\newcommand{\coAM}{\CbCla{coAM}}
\def\GRH{\textup{\sc GRH}}
\def\HN{\textup{\sc HN}}
\def\HNP{\textup{\sc HNP}}
\def\HNPE{\textup{\sc HNPE}}
\def\GWV{\textup{\sc GWVanishing}}
\def\poly{{\P}}
\DeclarePairedDelimiter{\angles}{\langle}{\rangle}
\DeclareMathOperator{\Gr}{Gr}
\DeclareMathOperator{\bbC}{\mathbb{C}} 
\newcommand{\bfa}{\mathbf a}
\newcommand{\bfd}{\mathbf d}
\newcommand{\bfU}{\mathbf U}
\newcommand{\bfV}{\mathbf V}
\newcommand{\bfW}{\mathbf W}
\newcommand{\bbP}{\mathbb P}
\newcommand{\hgw}[3]{\angles{ #1 }^{ #2}_{ #3}}
\newcommand{\dSize}{D}
\begin{document}

\title[On Vanishing of GW Invariants]{On Vanishing of Gromov--Witten Invariants}

\author[Igor Pak,  \. Colleen Robichaux, and Weihong Xu]{Igor Pak$^\star$, \.  Colleen Robichaux$^\star$, \. and \. Weihong Xu$^\diamond$}

\makeatletter

\thanks{\today}
\thanks{\thinspace ${\hspace{-.45ex}}^\star$Department of Mathematics,
UCLA, Los Angeles, CA 90095, USA. Email:  \texttt{\{pak,robichaux\}@math.ucla.edu}}
\thanks{\thinspace ${\hspace{-.45ex}}^\diamond$Department of Mathematics,
Caltech, Pasadena, CA 91125, USA. Email:  \texttt{weihong@caltech.edu}}


\begin{abstract}
We consider the decision problem of whether a particular Gromov--Witten invariant 
on a partial flag variety is zero.  We prove that for the $3$-pointed, 
genus zero invariants, this problem is in the complexity class $\AM$ 
assuming the \emph{Generalized Riemann Hypothesis} ($\GRH$),
and therefore lies in the second level of polynomial hierarchy~$\PH$.

For the proof, we construct an explicit system of polynomial equations through
a translation of the defining equations.  We also need to prove an extension
of the \emph{Parametric Hilbert's Nullstellensatz} to obtain our central reduction.
\end{abstract}

\maketitle

\section{Introduction}

\emph{Gromov--Witten invariants} \ts are rational numbers which count particular curves
on a manifold. Genus zero (\(3\)-pointed) Gromov--Witten invariants can be packaged  to
define an associative ring called the (small) \emph{quantum cohomology ring}, which deforms
the ordinary cohomology ring.

Their study began in work of Witten~\cite{Wit91}, arising in the study of string theory on Calabi–Yau manifolds. The mathematical foundations were developed by Gromov~\cite{Gro85}, Kontsevich--Manin~\cite{KM94}, and Ruan--Tian~\cite{RT94}.   Together, this created an active area of study with connections to several other fields. See
\cite{CK99,H+03} and references within, for connections to both mathematical 
and physical aspects of mirror symmetry, \cite{M05} for connections to tropical geometry, 
and \cite{KKPY} for applications in birational geometry.

On a \emph{partial flag variety} $Y$, the genus zero \(3\)-pointed Gromov--Witten (GW) invariants
count the number of degree \(\bf d\) algebraic maps from \(\bbP^1\) to \(Y\) such that a fixed triple of points in $\bbP^1$ lie in
general translates of a given triple of subvarieties $\Omega_i$, respectively.
In this restricted setting, GW invariants and (small) quantum cohomology are
particularly well studied, see e.g.\ \cite{Ful04} and references therein.

While these Gromov--Witten invariants are non-negative integers, we do not, in general, 
have (positive) combinatorial formulas to compute them.
In some cases, such as
when all \(\Omega_i\) are Schubert varieties and one of the cohomology classes \([\Omega_i]\) belongs to a set of generators of the cohomology ring, we obtain such formulas; see, e.g.\ \cite{Ber97,FGP97,CF99b}.
However, when no \([\Omega_i]\) belongs to this set of generators, finding combinatorial formulas for these invariants remains a difficult open problem for general partial flag varieties.

Finding a (positive) combinatorial formula is already a long-standing open problem
in the special case \ts \(\mathbf{d}=0\). In this case,
these invariants recover the \emph{Schubert structure constants}
in the cohomology ring of \(Y\).
That is, degree zero Gromov--Witten invariants count the number of points
in the intersection of a triple of Schubert varieties in general position.
In this setting, the vanishing problem is well-studied, but a combinatorial
criterion to determine their vanishing is not known in full generality.

In recent work of the first two authors \cite{PR-STOC,PR25},
they prove that the problem of deciding the non-vanishing of Schubert structure constants lies in the class $\AM$, assuming the $\GRH$.
See \cite{PR24b} for background, discussion, and references.  Our main result is a direct generalization:

\begin{thm}[{\rm Main Theorem}{}]
\label{t:main-AM}
The problem of deciding if a given genus zero \(3\)-pointed Gromov--Witten invariant is nonzero in the partial flag variety
is in \ts $\AM $ \ts assuming the \ts $\GRH$.
\end{thm}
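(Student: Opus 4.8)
\emph{Proof strategy.}
The plan is to exhibit a randomized polynomial-time many-one reduction from this decision problem to the feasibility problem $\HN$ for polynomial systems over $\overline{\mathbb Q}$, and then to invoke Koiran's theorem that $\HN\in\AM$ assuming $\GRH$; since $\AM$ is closed under such reductions this proves the claim, and as $\AM$ sits at the second level of $\PH$ we recover that consequence as well. The first observation is that the degree is effectively bounded: the invariant $\langle[\Omega_1],[\Omega_2],[\Omega_3]\rangle_{\mathbf d}=\int_{[\Mb_{0,3}(Y,\mathbf d)]}\ev_1^{*}[\Omega_1]\,\ev_2^{*}[\Omega_2]\,\ev_3^{*}[\Omega_3]$ lives on a space which, as $Y=G/P$ is convex, is unobstructed of dimension $\dim Y+\int_{\mathbf d}c_1(T_Y)$, so the invariant vanishes unless $\sum_i\codim\Omega_i$ equals this dimension. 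Since $\codim\Omega_i\le\dim Y\le\binom n2$ and $\int_{\mathbf d}c_1(T_Y)\ge|\mathbf d|$ (the anticanonical class pairs positively with every Schubert line), a potentially nonzero invariant forces $|\mathbf d|=O(n^2)$. Thus we reject in polynomial time when the dimensional identity fails, and otherwise all subsequent objects have polynomial size.

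\emph{From geometry to a parametric polynomial system.}
By Kleiman's transversality theorem, for a general triple $g=(g_1,g_2,g_3)\in G^3$ the scheme $\bigcap_i\ev_i^{-1}(g_i\Omega_i)$ is transverse, reduced, and zero-dimensional with cardinality equal to the invariant; and by convexity of $G/P$ this number, when nonzero, is realized by honest degree-$\mathbf d$ morphisms $f\colon\bbP^1\to Y$ with $f(0)\in g_1\Omega_1$, $f(1)\in g_2\Omega_2$, $f(\infty)\in g_3\Omega_3$. Hence the invariant is nonzero if and only if such an $f$ exists for general $g$. Now comes the translation of the defining equations: such an $f$ is a point in the interior of the Quot-scheme compactification, encoded by a flag of subsheaves $\mathcal F_1\subset\cdots\subset\mathcal F_k\subset\mathcal O_{\bbP^1}^{\,n}$ of prescribed ranks and degrees, that is, by tuples of binary forms of degree at most $\max_i d_i=\mathrm{poly}(n)$; the incidence conditions $f(p_i)\in g_i\Omega_i$ and the subbundle (local-freeness) conditions become rank conditions — each a conjunction of vanishings of maximal minors together with the nonvanishing of some minor — in the coefficients of these forms and the entries of the $g_i$. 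Collecting the former into variables $\mathbf x$ and the matrix entries of $g_1,g_2,g_3$ into parameters $\mathbf t$, we build in polynomial time a system $\Fc(\mathbf x;\mathbf t)$ of polynomially many equations (together with inequations, cleared by the Rabinowitsch trick) of polynomially bounded degree and height, with the property that the invariant is nonzero iff $\Fc(\mathbf x;\mathbf t)=0$ is solvable in $\mathbf x$ for generic $\mathbf t$ — equivalently, solvable over $\overline{\mathbb Q(\mathbf t)}$. The generic quantifier is genuinely needed: exactly as with two skew lines, a special choice of $g$ may make the intersection nonempty even when the invariant vanishes.

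\emph{The extended parametric Nullstellensatz.}
It remains to decide generic solvability of $\Fc(\mathbf x;\mathbf t)$. Both the locus of $\mathbf t$ over which $\Fc(\mathbf x;\mathbf t)=0$ has no $\mathbf x$-solution and the Zariski closure of the locus over which it has one are outputs of an elimination applied to a polynomial-size system, hence are varieties of degree (and defining height) singly exponential in the input size, i.e.\ of polynomial bit-size. Consequently, generic solvability is equivalent to solvability of $\Fc(\mathbf x;\mathbf t^{\ast})$ at a uniformly random integer point $\mathbf t^{\ast}$ drawn from a box whose side is a small polynomial multiple of that bit-size bound: when the invariant is nonzero, $\mathbf t^{\ast}$ avoids the bounded-degree unsolvable locus with high probability; when it vanishes, $\mathbf t^{\ast}$ avoids the bounded-degree closure of the solvable locus with high probability; and $\det g_i^{\ast}\neq0$ likewise holds with high probability, by Schwartz--Zippel. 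This is the randomized polynomial-time reduction from generic solvability to $\HN$; carrying it out with all degree, height, and dimension parameters kept polynomial, and handling the open (general-position and subbundle) constraints without leaving $\AM$, is precisely the extension $\HNPE$ of the Parametric Hilbert's Nullstellensatz $\HNP$ that we establish.

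\emph{Conclusion and the main difficulty.}
Composing the reductions of the three steps with Koiran's theorem that $\HN\in\AM$ under $\GRH$, and using that $\AM$ is closed under randomized polynomial-time many-one reductions, yields the theorem; membership at the second level of $\PH$ follows since $\AM$ lies there. I expect the crux to be the extended parametric Nullstellensatz of the third step: proving the requisite effective elimination bounds and certifying that a polynomial-bit-size (random) specialization of the group parameters faithfully detects generic solvability, all while preserving $\AM$-membership. The secondary difficulty is the rigor of the second step — justifying, via convexity of $G/P$, both the degree bound on $\mathbf d$ and the replacement of GW-nonvanishing by the existence of an honest incidence curve for general translates, and checking that the Quot-scheme parametrization really does produce a polynomial-size system, so that no enumeration over the a priori exponentially many boundary strata of stable maps is required.
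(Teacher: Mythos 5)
Your overall architecture (dimension count, passage to the Quot/hyperquot scheme, a polynomial system in the curve data with the reference flags as parameters, and ``generic solvability $=$ solvability over $\overline{\mathbb{C}(\mathbf t)}$'') matches the paper's, but there is a genuine gap exactly at the point you wave past with ``tuples of binary forms of degree at most $\max_i d_i$.'' A rank-$h$, degree-$-\mathrm{d}_h$ subsheaf of $\mathcal O_{\bbP^1}^{\,n}$ splits as $\oplus_i\mathcal O(-d_{h,i})$ with $\sum_i d_{h,i}=\mathrm{d}_h$, and the \emph{shape} of your system --- which entries of the transition matrices are forced to be zero and what degrees the remaining homogeneous forms have --- depends on the splitting type $(d_{h,i})$, which is not determined by $\mathbf d$. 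There are exponentially many such splittings, so you cannot write down a single polynomial-size system $\Fc(\mathbf x;\mathbf t)$, nor enumerate the splittings deterministically. This is precisely where the paper needs its new ingredient: the degrees $d_{h,i}$ become \emph{exponent variables} constrained by a small linear system, and the extension $\HNPE$ of the parametric Nullstellensatz is proved to lie in $\AM$ by having Merlin guess the splitting and then running the $\HNP$ protocol (using $\AM[4]=\AM$). Your proposal has no mechanism for this choice, and what you call ``$\HNPE$'' is a different beast (random specialization of the parameters $\mathbf t$), so the actual obstruction is not addressed.

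On your third step: the paper does not reprove the parametric Nullstellensatz by effective elimination plus Schwartz--Zippel; it cites the result of Ait El Manssour--Balaji--Nosan--Shirmohammadi--Worrell that $\HNP\in\AM$ under $\GRH$ and reduces generic solvability to solvability over $\overline{\mathbb{C}(\mathbf t)}$ by a short base-change argument. Your route --- bounding the degree and height of the locus of parameters where solvability differs from the generic behavior, then specializing at a random integer point --- is plausible in principle but is itself a substantial theorem (essentially a quantitative parametric Nullstellensatz) that you assert rather than prove; as written it cannot be ``composed'' with Koiran's theorem. Secondary, fixable differences: the paper enforces the open conditions not by Rabinowitsch but by witnessing full rank via $B_h M_h C_h={\sf Id}_h\cup 0$ and bounded rank via factorizations $R=XY$, and it handles partial flags by Woodward's comparison formula rather than working on $F(\mathbf a,n)$ directly.
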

Here $\AM$ is the class of decision problems whose ``yes" answers can be decided in polynomial time using an \emph{Arthur--Merlin protocol} with two messages, see \cite{AB09, Gol08}.  Heuristically, one should think of \ts $\AM$ \ts as a certain probabilistic extension of the class~$\NP$.
The \emph{Generalized Riemann Hypothesis} ($\GRH$) states that
all nontrivial zeros of $L$-functions \ts $L(s,\chi_k)$ \ts have real part~$\frac12$.
Following Koiran \cite{Koiran96}, this assumption is needed to ensure that there
are enough primes in short arithmetic progressions.

The paper is structured as follows.
In Section~\ref{sec:partial}, we define Gromov--Witten invariants for partial flag varieties and reduce them to those for complete flag varieties. We then give explicit
conditions that completely characterize the non-vanishing of GW invariants for complete flag varieties. In Section~\ref{sec:intro-HN}, we discuss the relevant computational complexity background for the problem and give a new extension of Koiran's theorem (Theorem~\ref{t:main-HNPE}).
In Section~\ref{sec:constrAndPf}, we translate the conditions for non-vanishing into a system of polynomial equations and prove our main theorem. Lastly, in Section~\ref{sec:finRem} we discuss extensions of our result and connections to existing literature.

\medskip

\section{Gromov--Witten invariants on partial flag variety}\label{sec:partial}
In this section, we introduce Gromov--Witten invariants for partial flag varieties. We provide a characterization for the non-vanishing of Gromov--Witten invariants for complete flag varieties. Lastly, we
reduce the vanishing problem for partial flags to the case of complete flags via Woodward's comparison formula \cite{Woo05}.

\subsection{The partial flag variety and GW invariants}\label{sec:geomIntro}
Let \(\bfa=(a_1,\dots, a_k)\in\mathbb{Z}^k\), where we take $0< a_1< \ldots< a_k< n$. The \emph{partial flag variety}, denoted \(F(\bfa,n)\), consists of flags of subspaces $V_\bullet$:
$$\{0\}=V_0\subset V_1 \subset V_2 \subset ... \subset V_{k}
\subset V=\bbC^n, \text{ where } \dim V_i=a_i.$$
We say the flag $V_\bullet$ above is a {complete flag} in $\bbC^n$ if $k=n$. Note that we can associate each vector space $V_i$ with an $n\times i$ matrix whose columns are the $i$ vectors generating $V_i$.

Restricting to particular $\bfa$ produces several well-studied spaces. For example, the \emph{Grassmannian} $\Gr(k,n)$ of $k$-dimensional planes in $\mathbb{C}^n$ is the partial flag variety $F((k),n)$.  Additionally, the \emph{complete flag variety} $F(n)$, which contains all complete flags in $\mathbb{C}^n$, is the partial flag variety $F((1,2,\ldots,n-1),n)$.

For $n\in\mathbb{Z}$, define $[n]:=\{i\in{\mathbb{Z}} \, : \, 1\leq i\leq n\}$. Let $S_n$ denote the symmetric group on $[n]$.
We consider the \emph{Young subgroup} with respect to $\bfa$:
\[
S_\bfa=S_{a_1}\times S_{a_2-a_1}\times \dots \times S_{n-a_k}\subseteq S_n.
\]
For a fixed complete flag $E_\bullet$,
we define the \emph{Schubert variety} $Y_{[w]}(E_\bullet)$ corresponding to a coset \([w]\in S_n/S_\bfa\) as
\[
Y_{[w]}(E_\bullet)=\{ V_\bullet\in F(\bfa,n): {\rm{rank}}(E_j\to \bbC^n/V_i)\leq r_w(i,j)\ts \text{ for }\ts i\in[k] \text{ and }\ts j\in[n]\ts\}.
\]
Here \(w\in S_n\) is any representative of \([w]\) and $r_w:[n]^2\rightarrow [n]$ is the rank function  such that
\[r_w(i,j)=|\{h\leq i \, : \, w(h)\leq j\}|.\]
The cohomology class of \(Y_{[w]}\), denoted  $[Y_{[w]}]\in H^*(F(\bfa,n))$, is independent of the choice of the reference flag
$E_\bullet$.

Let \(\bfd=(\mathrm{d}_1,\dots,\mathrm{d}_k)\in\mathbb{Z}_{\geq 0}^k\) and cosets \([u],[v],[w]\in S_n/S_\bfa\). We define the \emph{Gromov--Witten (GW) invariant} \(\hgw{[Y_{[u]}],[Y_{[v]}],[Y_{[w]}]}{}{\bfd}\) as the number of maps \(f: \bbP^1\to F(\mathbf{a},n)\) such that:
\begin{enumerate}
    \item \(f(0)\in Y_{[u]}(E_\bullet)\), \(f(1)\in Y_{[v]}(E'_\bullet)\), and \(f(\infty)\in Y_{[w]}(E''_\bullet)\), for \(E_\bullet, E'_\bullet, E''_\bullet\) complete flags in general position, and
    \item there are exactly \(\mathrm{d}_i\) pre-images under $f$ of a general hyperplane pulled back along the composition
    \[
    F(\mathbf{a},n)\to \Gr(a_i,n)\to \bbP^{\binom{n}{a_i}-1},
    \]
    for each \(i\in[k]\).
\end{enumerate}
If there are infinitely many such maps, then \(\hgw{[Y_{[u]}],[Y_{[v]}],[Y_{[w]}]}{}{\bfd}\) is defined to be \(0\).

Define the decision problem
$$
\GWV \, := \, \big\{\hgw{[Y_{[u]}],[Y_{[v]}],[Y_{[w]}]}{}{\bfd}=^? 0 \big\}.
$$

\subsection{Characterizing GW invariants on complete flag varieties}\label{sec:complete}

In Section~\ref{sec:partRed}, we will see that computing GW invariants on partial flag varieties can be reduced to computing those on complete flag varieties.
Therefore, we now focus on the case of the complete flag variety \(F(n)\). We use standard definitions and notation in algebraic geometry. We refer to \cite{Vak25} for necessary background. We follow the setup of \cite{CF99a}.

Write $V_X=V\otimes \cO_X$ for any scheme $X$.
Then there is a universal sequence of quotient bundles on $F(n)$:
$$V_{F(n)}\twoheadrightarrow Q_{n-1}\twoheadrightarrow\cdots\twoheadrightarrow Q_{1}.$$
Here $\mbox{rank }Q_i=i$, and each $Q_i\twoheadrightarrow Q_{i-1}$ is a surjection.

Fix a flag $E_\bullet\in F(n)$.
In \(F(n)\), Schubert varieties are indexed by $w\in S_n$. We can equivalently define them as follows:
$$X_w(E_\bullet)=\{V_\bullet\in F(n) \ts : \ts \mbox{rank}_{V_\bullet}(E_j\otimes
\cO_{F(n)}\twoheadrightarrow Q_i)\leq r_w(i,j)\ts \mbox{ for } \ts i,j\in[n]\}.$$
The Schubert variety $X_w(E_\bullet)$ is a codimension $\ell(w)$ subvariety in $F(n)$, where $\ell(w)$ is the number of inversions
of $w$.  {The cohomology class of \(X_w\), denoted by $[X_w]$ is independent of the choice of the flag
$E_\bullet$.}

Fix \(\bfd=(\mathrm{d}_1,\dots,\mathrm{d}_{n-1})\in\mathbb{Z}_{\geq 0}^{n-1}\). The hyperquot scheme \(H_{\bfd}(n)\) \cite{CF99a,Kim96} parametrizes sequences of successive quotients of sheaves on \(\bbP^1\):
\begin{align*}
    V^*_{\bbP^1}\twoheadrightarrow B_{n-1}\twoheadrightarrow \dots \twoheadrightarrow B_1.
\end{align*}
 Here \(V^*\) denotes the vector space dual to \(V\). Additionally, each \(B_i\) has rank \(i\) and degree \(\mathrm{d}_{n-i}\).
The scheme \(H_{\bfd}(n)\) is a smooth irreducible variety of dimension
\[\binom{n}{2}+2\sum_{i=1}^{n-1}\mathrm{d}_i.\]
Further, \(H_{\bfd}(n)\) contains \(M_\bfd(n)\), the set of degree \(\bfd\) morphisms from \(\bbP^1\) to \(F(n)\), as a dense open subset.

Given \(u,v,w\in S_n\), the associated GW invariant \(\hgw{[X_u],[X_v],[X_w]}{}{\bfd}\) is \(0\) unless the codimensions of \(X_u, X_v, X_w\) sum up to the dimension of \(M_\bfd(n)\), i.e.,
\begin{align}\label{eqn:dimcount}
    \ell(u)+\ell(v)+\ell(w)=\binom{n}{2}+2\sum_{i=1}^{n-1}\mathrm{d}_i.
\end{align}
See~\cite[$\S$4]{CF99a} for further discussion.

\begin{lemma}\label{lem:GWsetup}
Consider $u,v,w\in S_n$ and \(\bfd=(\mathrm{d}_1,\dots,\mathrm{d}_{n-1})\).
Then for generic $U_\bullet, V_\bullet, W_\bullet\in F(n)$, the GW invariant
$\hgw{[X_u],[X_v],[X_w]}{}{\bfd}\neq 0$ if and only if each of the following conditions is satisfied:
\begin{enumerate}
    \item \eqref{eqn:dimcount} holds.
    \item For each $1\leq i\leq h\leq n-1$, there exist non-negative integers $d_{h,i}$  such that \(\sum_i d_{h,i}=\mathrm{d}_h\).
    \item Take \(d_{n,j}=0\) for all \(j\in[n]\). Then for each \(h\in[n-1]\), there exist \(h\times (h+1)\) matrices \(M_h(s,t)\) over \(\bbC[s,t]\)  whose \((i,j)\)-th entry is either \(0\) or homogeneous of degree \(d_{h,i}-d_{h+1,j}\) such that
    \begin{enumerate}
    \item[(a)] \(M_h(s,t)\) is full rank over \(\bbC(s,t)\) for all \(h\in[n-1]\), and
    \item[(b)]\label{item:b} for \(i,j\in[n]\), each \(i\times j\) product matrix below \[M_i(0,1)\dots M_{n-1}(0,1)\bfU_j,\quad M_i(1,1)\dots M_{n-1}(1,1)\bfV_j,\quad M_i(1,0)\dots M_{n-1}(1,0)\bfW_j,\] has rank \textcolor{purple}{ at most } \(r_u(i,j), r_v(i,j), r_w(i,j)\), respectively.
\end{enumerate}
\end{enumerate}
Here, \(\bfU_j\, \bfV_j, \bfW_j\) are the matrices corresponding to the vector spaces \(U_j, V_j, W_j\),
respectively.
\end{lemma}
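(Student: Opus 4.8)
The strategy is to follow the hyperquot--scheme framework of Ciocan--Fontanine \cite{CF99a} (building on Kim \cite{Kim96}): realize the invariant $\hgw{[X_u],[X_v],[X_w]}{}{\bfd}$ as an intersection number on $H_\bfd(n)$, use Kleiman transversality to convert non-vanishing into the existence of an honest degree $\bfd$ curve meeting the three Schubert varieties, and then transcribe that existence statement into the linear algebra of (2)--(3). Condition (1) is literally the dimension count \eqref{eqn:dimcount}: if it fails the invariant is $0$ and (1) is false, so we assume it holds.

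For the geometric reduction, recall that $H_\bfd(n)$ is smooth and irreducible of dimension $\binom n2+2\sum_i\mathrm d_i$, contains $M_\bfd(n)$ as a dense open, and carries evaluation maps $\ev_0,\ev_1,\ev_\infty\colon H_\bfd(n)\to F(n)$ extending $f\mapsto(f(0),f(1),f(\infty))$. By \cite{CF99a}, and using that the fundamental class of the smooth $H_\bfd(n)$ is its virtual class so that no boundary correction arises, the invariant equals $\int_{H_\bfd(n)}\ev_0^*[X_u]\,\ev_1^*[X_v]\,\ev_\infty^*[X_w]$ once (1) holds. Since $F(n)^3$ is homogeneous under $\GL_n^3$ and $(\ev_0,\ev_1,\ev_\infty)$ maps into it, Kleiman transversality gives, for generic $U_\bullet,V_\bullet,W_\bullet$, that
\[
Z:=\ev_0^{-1}(X_u(U_\bullet))\cap\ev_1^{-1}(X_v(V_\bullet))\cap\ev_\infty^{-1}(X_w(W_\bullet))
\]
is empty or a reduced, $0$-dimensional (by (1)) finite set, so the invariant equals $\#Z$; applying the same principle to the positive-codimension boundary $H_\bfd(n)\smallsetminus M_\bfd(n)$ forces $Z\subseteq M_\bfd(n)$. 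Hence the invariant is nonzero if and only if there is a degree $\bfd$ morphism $f\colon\bbP^1\to F(n)$ with $f(0)\in X_u(U_\bullet)$, $f(1)\in X_v(V_\bullet)$, $f(\infty)\in X_w(W_\bullet)$.

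It then remains to translate the existence of such an $f$ into (2)--(3). A degree $\bfd$ morphism is a chain of locally free quotients $V^*_{\bbP^1}\twoheadrightarrow B_{n-1}\twoheadrightarrow\cdots\twoheadrightarrow B_1$ with $\mathrm{rk}\,B_h=h$ and $\deg B_h$ prescribed by $\bfd$; each $B_h$ is globally generated (a quotient of a trivial bundle), so by Grothendieck's splitting theorem $B_h\cong\bigoplus_i\mathcal O_{\bbP^1}(d_{h,i})$ with $d_{h,i}\ge 0$ and $\sum_i d_{h,i}=\deg B_h$ --- which, after matching index conventions and putting $d_{n,j}=0$ for $B_n=V^*_{\bbP^1}$, is (2). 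The successive surjection $B_{h+1}\to B_h$ is an $h\times(h+1)$ matrix $M_h(s,t)$ whose $(i,j)$-entry lies in $H^0(\mathcal O_{\bbP^1}(d_{h,i}-d_{h+1,j}))$, i.e.\ is $0$ or homogeneous of degree $d_{h,i}-d_{h+1,j}$, and surjectivity forces each $M_h$ to be full rank everywhere, in particular over $\bbC(s,t)$, giving (3a). Unwinding the defining rank inequalities of $X_u(E_\bullet)$ in terms of $E_j\otimes\mathcal O\twoheadrightarrow Q_i$, together with the fact that the structure maps of $f^*Q_i$ at $0$, $1$, $\infty$ are $M_i\cdots M_{n-1}$ evaluated at $(0,1)$, $(1,1)$, $(1,0)$, turns the three incidence conditions into exactly the rank bounds of (3b). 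For the converse direction, from a solution of (2)--(3) for generic flags the composites $M_i\cdots M_{n-1}$ define a rational map $\bbP^1\dashrightarrow F(n)$ extending to a morphism $\bar f$ of multidegree $\bfd'\le\bfd$; one argues that for generic flags a common factor anywhere in the chain --- equivalently $\bfd'<\bfd$ --- would produce a strictly lower-degree morphism still meeting the incidence conditions, which is incompatible with (1) and Kleiman transversality, so $\bfd'=\bfd$ and $\bar f\in Z\cap M_\bfd(n)$ witnesses non-vanishing.

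The main obstacle will be making the two geometric inputs precise and compatible with the stated algebraic conditions. First, one must invoke \cite{CF99a} carefully: that the smooth hyperquot scheme computes these genus-zero $3$-pointed invariants as ordinary intersection numbers, and --- via Kleiman --- that the relevant intersection is transverse and misses the boundary for generic flags. Second, and more delicate, is reconciling the conditions ``full rank over $\bbC(s,t)$'' and ``rank at most'' with the scheme structure: the forward implication is bookkeeping (tracking $B_\bullet$, the universal $Q_\bullet$, the functions $r_w$, and the marked points), but the converse requires excluding degenerate, rank-dropping chains for generic flags, which once again reduces to the dimension constraint (1). The remaining linear-algebra steps are routine.
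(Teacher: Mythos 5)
Your overall strategy --- compute the invariant as a point count on the hyperquot scheme via \cite{CF99a} and transcribe the geometry into linear algebra --- is the paper's strategy, but your translation step has a genuine mismatch with the statement of the lemma. You take the matrices $M_h(s,t)$ to be the successive \emph{surjections} $B_{h+1}\twoheadrightarrow B_h$ of the quotient bundles of an honest morphism $f\in M_\bfd(n)$. Surjectivity of a map of vector bundles on $\bbP^1$ forces the matrix to have full rank at \emph{every} point of $\bbP^1$, whereas condition (iii)(a) only demands full rank over $\bbC(s,t)$, i.e.\ generic full rank. Under your reading, conditions (ii)--(iii) are therefore strictly weaker than the existence of a morphism, and your converse direction must exclude rank-dropping chains via the unproved claim that a common degeneration point would produce a lower-degree morphism still meeting all three Schubert varieties; that is exactly where the gap sits. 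The paper sidesteps this entirely by using the dual description of $H_\bfd(n)$: a point is a flag of subsheaves $A_1\hookrightarrow\cdots\hookrightarrow A_{n-1}\hookrightarrow V^*_{\bbP^1}$, where $A_h=\ker(V^*_{\bbP^1}\to B_{n-h})$ is automatically locally free (a subsheaf of a free sheaf on a curve), of rank $h$ and degree $-\mathrm{d}_h$, hence isomorphic to $\oplus_{i}\cO(-d_{h,i})$ with $d_{h,i}\ge 0$ and $\sum_i d_{h,i}=\mathrm{d}_h$. The $M_h$ are then the matrices of the \emph{inclusions} $A_h\to A_{h+1}$, and injectivity of a map of locally free sheaves on an integral curve is precisely generic full rank --- so (iii)(a) characterizes points of all of $H_\bfd(n)$, boundary included, and no ``missing the boundary'' or degree-drop argument is needed: \cite{CF99a} identifies the invariant with the number of points of the intersection of the degeneracy loci $X'_u,X'_v,X'_w$ in $H_\bfd(n)$, which is reduced for generic flags.

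A secondary but telling bookkeeping error confirms the mismatch: in the hyperquot scheme $B_h$ has rank $h$ and degree $\mathrm{d}_{n-h}$, so your splitting degrees satisfy $\sum_i d_{h,i}=\mathrm{d}_{n-h}$, not $\sum_i d_{h,i}=\mathrm{d}_h$ as condition (ii) requires; relabeling $h\mapsto n-h$ breaks the block sizes ($M_h$ must be $h\times(h+1)$), so this cannot be repaired by ``matching index conventions'' alone. The $d_{h,i}$ of the lemma are the negated splitting degrees of the kernels $A_h$, not of the quotients $B_h$. To repair your argument you would either have to switch to the subsheaf description as the paper does, or strengthen (iii)(a) to pointwise full rank and then genuinely prove the degree-drop exclusion for generic flags.
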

 \begin{proof}
As noted above, if \eqref{eqn:dimcount} doesn't hold, then \(\hgw{[X_u],[X_v],[X_w]}{}{\bfd}=0\) for dimension reasons.
Now assume \eqref{eqn:dimcount} holds.

Let \(e_0, e_1, e_\infty: M_\bfd(n)\to F(n)\) be the maps given by evaluations at \(0,1,\infty\in\bbP^1\).
Fix \(a\in \bbP^1\) and a flag $V_\bullet\in  F(n)$. Let \(X'_w(V_\bullet,a)\subseteq H_{\bfd}(n)\) be the degeneracy locus corresponding to the sequences of quotients of sheaves
\begin{align}\label{eqn:seq}
    V^*_{\bbP^1}\twoheadrightarrow B_{n-1}\twoheadrightarrow \dots \twoheadrightarrow B_1
\end{align}
such that
$$\mbox{rank\,}_a
(V_j\otimes \cO_{\bbP^1}\rightarrow A_i^*)\leq r_w(i,j),\quad \text{for }\ts
i,j\in[n],$$
where \(A_i\) is the kernel of the map \(V^*_{\bbP^1}\twoheadrightarrow B_{n-i}\).  Recall that each \(B_i\) has rank \(i\) and degree \(d_{n-i}\).

 Then by \cite[$\S$4]{CF99a},  \(\hgw{[X_u],[X_v],[X_w]}{}{\bfd}\) counts the number of points in
\begin{align}\label{eqn:quot-map}
    X'_u(U_\bullet,0)\cap X'_v(V_\bullet,1)\cap X'_w(W_\bullet,\infty)=e_0^{-1}\left(X_u(U_\bullet)\right)\cap e_1^{-1}\left(X_v(V_\bullet)\right)\cap e_\infty^{-1}\left(X_w(W_\bullet)\right).
\end{align}
 Thus, \(\hgw{[X_u],[X_v],[X_w]}{}{\bfd}=0\) if and only if \(X'_u(U_\bullet,0)\cap X'_v(V_\bullet,1)\cap X'_w(W_\bullet,\infty)=\emptyset\).
 When \(U_\bullet, V_\bullet, W_\bullet\) are in general position, this intersection~\eqref{eqn:quot-map} consists of reduced points. Each such point corresponds to a sequence of quotients in $H_{\bfd}(n)$ as in~\eqref{eqn:seq} such that
\begin{align}\label{eqn:rkEq}
\begin{split}
    \mbox{rank\,}_0
(U_j\otimes \cO_{\bbP^1}\rightarrow A_i^*)\leq r_w(i,j),\quad
\text{ for } & \ts i,j\in[n],\\
\mbox{rank\,}_1
(V_j\otimes \cO_{\bbP^1}\rightarrow A_i^*)\leq r_w(i,j),\quad
\text{ for }  &\ts i,j\in[n], \text{ and}\\
\mbox{rank\,}_\infty
(W_j\otimes \cO_{\bbP^1}\rightarrow A_i^*)\leq r_w(i,j),\quad
\text{ for } & \ts i,j\in[n].
\end{split}
\end{align}

For each $h\in[n-1]$, the sheaf \(A_h\) is a subsheaf of the free sheaf \(V^*_{\bbP^1}\), so each \(A_h\) is locally free. Since \(B_h\) has rank \(h\) and degree \(\mathrm{d}_{n-h}\), each \(A_h\)
must have rank \(h\) and degree \(-\mathrm{d}_h\). Therefore, each \(A_h\) is isomorphic to \(\oplus_{i=1}^h\cO(-d_{h,i})\) for some non-negative integers \(d_{h,i}\) such that \(\sum_i d_{h,i}=\mathrm{d}_h\). The existence of such integers is encoded in (ii).
The data of \eqref{eqn:seq} are equivalent to the data of a sequence of inclusions of sheaves
\[
A_1\hookrightarrow \dots\hookrightarrow A_{n-1}\hookrightarrow V_{\bbP^1}^*.
\]
Note that a map \(\cO(a)\to \cO(b)\)  is determined by a section of \(\cO(b-a)\). In addition, sections of \(\cO(b-a)\) can be identified with polynomials in \(\bbC[s,t]\) that are homogeneous of degree \(b-a\). Note that when \(b<a\) this line bundle has no non-zero section. Set \(A_n=V^*_{\bbP^1}\cong \cO(0)^{\oplus n}\).
Combining these facts, each map \(A_h\to A_{h+1}\)  is determined by an \(h\times (h+1)\) matrix \(M_h(s,t)\) over \(\bbC[s,t]\) for \(h\in[n-1]\) whose \((i,j)\)-th entry is either \(0\) or homogeneous of degree \(d_{h,i}-d_{h+1,j}\). Finally, condition (iii)(a) encodes the injectivity of the maps $A_h\to A_{h+1}$, and condition (iii)(b) imposes \eqref{eqn:rkEq}.
 \end{proof}

Note that, without loss of generality, we may assume \(d_{i,1}\leq \dots\leq d_{i,i}\) for \(i\in[n-1]\) above.

\subsection{Reduction from partial flag varieties to complete flag varieties}\label{sec:partRed}
First set \(\bfa=(a_1,\dots, a_k)\in\mathbb{Z}^k\), where $1\leq a_1< \ldots <a_k\leq n$. Consider \(\bfd=(\mathrm{d}_1,\dots,\mathrm{d}_k)\in\mathbb{Z}_{\geq 0}^k\) and cosets \([u],[v],[w]\in S_n/S_\bfa\).

By \cite[Lemma/Def.~1]{Woo05}, there is a unique sequence of non-negative integers \(\widehat{\bfd}=(\widehat{d}_1,\dots,\widehat{d}_{n-1})\) such that
\(\widehat{d}_{a_i}=\mathrm{d}_i\) for \(i\in[k]\)
and
\begin{equation}\label{eq:lift}
    -\widehat{d}_{i-1}+\widehat{d}_i+\widehat{d}_j-\widehat{d}_{j+1}\in \{0,-1\}
\end{equation}
when \(a_h<i\leq j<a_{h+1}\) for some \(h\in[k]\). Here we set \(a_0=0,\  a_{k+1}=n\), and \(\widehat{d}_0=\widehat{d}_n=0\). Moreover, as a special case of \cite[Thm~2]{Woo05}, we have
\begin{equation}\label{eq:partialReduction}
    \hgw{[Y_{[u]}],[Y_{[v]}],[Y_{[w]}]}{}{\bfd}=\hgw{[X_{\widehat{u}}],[X_{\widehat{v}}],[X_{\widehat{w}w'}]}{}{\widehat{\bfd}},
\end{equation}
where \(\widehat{u},\widehat{v},\widehat{w}\) are the minimal length representatives of the cosets \([u],[v],[w]\), respectively. Here \(w'\) is the longest element in the subgroup of \(S_\bfa\) generated by transpositions \((i,j+1)\) such that
\[
-\widehat{d}_{i-1}+\widehat{d}_i+\widehat{d}_j-\widehat{d}_{j+1}=0.
\]
\begin{rem}
    Alternatively, instead of going through this reduction to complete flag varieties,
    one could instead generalize Lemma~\ref{lem:GWsetup} to partial flag varieties and proceed directly.
\end{rem}

\medskip

\section{Hilbert's Nullstellensatz and variants}\label{sec:intro-HN}
Take $R=\cc{}[x_1,\dots,x_s]$ for some $s>0$. Set $\bx=(x_1,\ldots,x_s)$.
\emph{Hilbert's weak Nullstellensatz} \ts
 states that a polynomial system
\begin{equation}\label{eq:HN-system}
f_1(\bx) \. = \. \ldots \. =  \. f_m(\bx) \. = \. 0,
\end{equation}
where $f_i\in R$, is unsatisfiable over $\cc$ if and only if
there exist \ts $g_1,\ldots,g_m\in R$ such that
$$
\sum_{i=1}^m \. f_i(\bx) \. g_i(\bx) \, = \, 1\ts.
$$

Now, impose that \ts $f_1,\ldots,f_m\in \zz[x_1,\dots,x_s]$.
The decision problem $\HN$ (\emph{Hilbert's Nullstellensatz})
queries if the system of equations \eqref{eq:HN-system}
is satisfiable over \ts $\mathbb{C}$.

For $g\in \zz[x_1,\ldots,x_n]$, let $\deg(g)$ denote the degree
of~$g$, and let $s(g)$ denote the sum of bit-lengths of coefficients in~$g$.
We define the \emph{size} of~$g$ to be
$$
\phi(g) \. := \. \deg(g) \. + \. s(g).
$$
Then the \emph{size} of the system of polynomials  $\ov f = (f_1,\ldots, f_m)$ is defined as
$$
\phi\big(\ov f\big) \. := \. \sum_{i=1}^{m} \. \deg(f_i) \. + \. \sum_{i=1}^{m} \. s(f_i).
$$
For a matrix $M$ with polynomial entries, the \emph{size} $\phi(M)$
is the sum of the sizes of its entries.

Work of Mayr and Meyer \cite{MM82} implies that \ts $\HN$ \ts is decidable.
In particular, they show \ts $\HN$ \ts is in \ts $\EXPSPACE$, and that \ts $\HN$ \ts is \ts $\NP$-hard.
Later work of Brownawell \cite{Bro87} and Koll\'ar \cite{Kol88} for the \emph{effective Nullstellensatz}
proves the existence of $g_i$ with single exponential size. These improved bounds place \ts $\HN$ \ts in \ts $\PSPACE$.
Then in a landmark paper, Koiran proved that
\ts $\HN$ \ts is in the polynomial hierarchy:

\smallskip

\begin{thm}[{\cite[Thm~2]{Koiran96}}{}]\label{t:main-HN}
    \. $\HN$ \ts is in \ts $\AM$ \ts assuming \ts $\GRH$.
\end{thm}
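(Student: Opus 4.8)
The plan is to prove Theorem~\ref{t:main-HN} by reducing $\HN$ to a counting/estimation problem about the number of solutions of the system \eqref{eq:HN-system} modulo primes, and then invoking an Arthur--Merlin protocol for that estimation. First I would recall the guiding principle: over a finite field $\mathbb{F}_q$, a variety has a predictable number of points, and more importantly, a system that is satisfiable over $\cc$ (equivalently over $\ov{\qqq}$) is satisfiable over $\mathbb{F}_p$ for all but finitely many primes $p$, while an unsatisfiable system is never satisfiable mod $p$ once $p$ exceeds an effectively bounded threshold. The key quantitative input is an effective Nullstellensatz (Brownawell~\cite{Bro87}, Koll\'ar~\cite{Kol88}): if $f_1 = \cdots = f_m = 0$ is unsatisfiable over $\cc$, then $\sum_i f_i g_i = 1$ for polynomials $g_i$ of degree at most $D^{O(s)}$ and with coefficients of bit-length at most $2^{\poly(\phi(\ov f))}$, where $D = \max_i \deg f_i$. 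Clearing denominators, this yields a single exponentially-large integer $N$ (a determinant-like combination of the coefficients of the $f_i$) such that unsatisfiability over $\cc$ forces $N \neq 0$ and hence the system has no solution modulo any prime $p \nmid N$; conversely satisfiability over $\cc$ gives solutions mod every large enough prime.

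The next step is to set up the $\AM$ protocol. The prover (Merlin) wants to convince the verifier (Arthur) that the system is satisfiable over $\cc$. Since $N$ is a nonzero integer of bit-length at most $2^{\poly(\phi(\ov f))}$ when the system is \emph{un}satisfiable, it has at most $2^{\poly(\phi(\ov f))}$ prime divisors, so the density of ``bad'' primes in a suitable range is small. Here $\GRH$ enters exactly as in Koiran's work: by Koiran's effective prime-counting estimates (building on the explicit bounds for primes in arithmetic progressions that $\GRH$ provides), one can choose a random prime $p$ of $\poly(\phi(\ov f))$ bits — small enough to write down, large enough that if the system is satisfiable over $\cc$ it remains satisfiable over $\overline{\mathbb{F}_p}$, and with the property that a random such $p$ avoids the bad primes except with small probability. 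Arthur picks such a random $p$ (of polynomially many bits) and sends it to Merlin. Merlin must then certify that the system \eqref{eq:HN-system} has a solution over a small-degree extension $\mathbb{F}_{p^k}$; this is just an $\NP$-type witness (a tuple of field elements, verified by plugging in), so Merlin sends it and Arthur checks it in polynomial time. Soundness: if the system is unsatisfiable over $\cc$, then with high probability over the choice of $p$, it is unsatisfiable over every $\mathbb{F}_{p^k}$, so no witness can exist; completeness: if satisfiable over $\cc$, a witness exists for every good $p$, so Merlin always succeeds. This is precisely the structure of a two-message Arthur--Merlin protocol, placing $\HN \in \AM$.

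The main obstacle — and the heart of Koiran's contribution — is controlling the size of the prime $p$: one must guarantee that $\poly(\phi(\ov f))$-bit primes suffice, which requires (a) the effective Nullstellensatz bound to pin down the bit-length of the obstruction $N$, so that $N$ has only exponentially many prime factors, and (b) a quantitative statement that primes in the relevant range are dense enough that a random polynomially-long prime hits a ``good'' one with overwhelming probability. Step (b) is where $\GRH$ is genuinely needed: without it, the best unconditional bounds on gaps between primes (or primes in short intervals/progressions) are too weak to keep $p$ polynomially bounded. A secondary technical point is the transfer between satisfiability over $\cc$ and over finite fields of the chosen characteristic — for this one uses that $\cc$ and $\ov{\mathbb{F}_p}$ are both algebraically closed of the same (large) characteristic behavior, together with the effective Nullstellensatz certificate to rule out the unsatisfiable case. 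I would also remark that the number-of-solutions subtleties (reducedness, multiplicity) do not arise here because $\HN$ only asks about \emph{existence} of a solution, so a bare $\NP$-witness over $\mathbb{F}_{p^k}$ suffices and no counting is involved.
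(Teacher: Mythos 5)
You should first note that the paper does not prove this statement at all: Theorem~\ref{t:main-HN} is imported verbatim from \cite{Koiran96} as a black box, so the only meaningful comparison is with Koiran's own argument. Your sketch gets the two correct ingredients --- the effective Nullstellensatz of \cite{Bro87,Kol88} to bound the ``obstruction'' integer $N$ in the unsatisfiable case, and $\GRH$ to make prime-counting effective --- but the protocol you build from them is not Koiran's and, as stated, does not work.

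The gap is in completeness. Your protocol has Arthur send a random prime $p$ of polynomially many bits and asks Merlin to exhibit a solution over a \emph{small-degree} extension $\mathbb{F}_{p^k}$. There is no such guarantee. A satisfiable system in $s$ variables of degree $D$ may have all of its solutions defined over number fields of degree $D^{s}$, which is singly exponential in $\phi(\ov f)$ (e.g.\ $x_1^2=2,\ x_2^2=x_1,\dots,x_s^2=x_{s-1}$). Reducing mod a typical prime $p$, the smallest $k$ with a solution in $\mathbb{F}_{p^k}$ is then exponentially large, so Merlin's witness is exponentially long; a solution lies in the prime field $\mathbb{F}_p$ itself only for a possibly exponentially small \emph{fraction} of primes (density roughly $1/\deg$ of the eliminant, by Chebotarev). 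Hence a uniformly random prime almost never admits a polynomial-size certificate, and your two-message protocol fails on ``yes'' instances. This is precisely why Koiran does not sample a prime: he shows that the \emph{absolute number} of primes $p$ in a singly-exponential range for which the system has a solution in $\mathbb{F}_p$ is large when the system is satisfiable (via effective Chebotarev/primes in arithmetic progressions, which is where $\GRH$ enters) and small when it is unsatisfiable (via the effective Nullstellensatz bound on $N$), and then certifies the resulting set-size gap with the Goldwasser--Sipser lower-bound protocol applied to the $\NP$-verifiable set $\{p : \eqref{eq:HN-system} \text{ has a solution in } \mathbb{F}_p\}$. Your soundness analysis (few bad primes in the unsatisfiable case) is fine; it is the ``random prime $+$ $\NP$ witness'' completeness step that must be replaced by the set-size lower bound argument.
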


\smallskip
Above, $\GRH$ refers to the \emph{Generalized Riemann Hypothesis}.
Koiran uses this $\GRH$ assumption to ensure the existence of primes in certain intervals with particular modular constraints.

For our purposes, we need the following
strengthening of Theorem~\ref{t:main-HN}.
Let
$$
f_1,\ldots,f_m\.\in\. \mathbb{Q}(y_1,\ldots,y_k) \ts [x_1,\dots,x_s]\ts.
$$
The decision problem $\HNP$ (\emph{Parametric Hilbert's Nullstellensatz})
asks if the polynomial system \eqref{eq:HN-system} has a solution over
\ts $\overline{\mathbb{C}(y_1,\ldots,y_k)}$.
In recent work, Ait El Manssour, Balaji, Nosan, Shirmohammadi and Worrell
extended Theorem~\ref{t:main-HN} to~$\HNP:$

\smallskip

\begin{thm}[{\cite[Thm~1]{A+25}}{}]\label{t:main-HNP}
    \. $\HNP$ \ts is in \ts $\AM$ \ts assuming \ts $\GRH$.
\end{thm}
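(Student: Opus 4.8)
The plan is to give a polynomial-time \emph{randomized many-one reduction} from $\HNP$ to $\HN$ and then invoke Koiran's Theorem~\ref{t:main-HN}. Write $\mathbf{y}=(y_1,\dots,y_k)$, $\mathbf{x}=(x_1,\dots,x_s)$, and let $N$ be the total bit-size of the input instance. \emph{Step 1 (clearing denominators).} Given $f_1,\dots,f_m\in\mathbb{Q}(\mathbf{y})[\mathbf{x}]$, multiply through by a common denominator $D(\mathbf{y})\in\mathbb{Z}[\mathbf{y}]\setminus\{0\}$ of all coefficients and rescale, producing $F_1,\dots,F_m\in\mathbb{Z}[\mathbf{y},\mathbf{x}]$ with $F_i=c_i\,D\cdot f_i$ for nonzero integers $c_i$; this is polynomial time and blows the size up polynomially. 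Since $D$ is a unit of $\overline{\mathbb{C}(\mathbf{y})}$, the systems $\{f_i=0\}$ and $\{F_i=0\}$ have the same solution set over $\overline{\mathbb{C}(\mathbf{y})}$. Setting $V=V(F_1,\dots,F_m)\subseteq\mathbb{A}^{k+s}$, the Nullstellensatz over the field $\mathbb{C}(\mathbf{y})$ (together with faithful flatness over $\mathbb{Q}$) shows that $\{F_i=0\}$ is satisfiable over $\overline{\mathbb{C}(\mathbf{y})}$ if and only if the projection $\pi\colon V\to\mathbb{A}^k_{\mathbf{y}}$ is dominant, equivalently $(F_1,\dots,F_m)\cap\mathbb{Q}[\mathbf{y}]=\{0\}$.

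\emph{Step 2 (random specialization of the parameters).} Fix $B:=2^{N^2}$. The claim is: if $\mathbf{y}_0$ is drawn uniformly at random from $\{-B,\dots,B\}^k$, then the specialized integer system $F_1(\mathbf{y}_0,\mathbf{x})=\dots=F_m(\mathbf{y}_0,\mathbf{x})=0$ is satisfiable over $\mathbb{C}$ if and only if $\{f_i=0\}$ is satisfiable over $\overline{\mathbb{C}(\mathbf{y})}$, except with probability $\le 1/6$. For the implication ``original unsatisfiable $\Rightarrow$ specialization unsatisfiable'', apply the effective Nullstellensatz over $\mathbb{Q}(\mathbf{y})$ \cite{Kol88} together with Cramer-style bounds on clearing denominators, obtaining $G_i\in\mathbb{Z}[\mathbf{y},\mathbf{x}]$ and $H\in\mathbb{Z}[\mathbf{y}]\setminus\{0\}$ with $\sum_i F_iG_i=H(\mathbf{y})$ and $\deg H$ at most single-exponential in $N$; whenever $D(\mathbf{y}_0)H(\mathbf{y}_0)\ne 0$ the specialization inherits a Nullstellensatz certificate and is unsatisfiable, and by Schwartz--Zippel this fails with probability $\le(\deg D+\deg H)/B\le 1/6$. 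For the converse, dominance of $\pi$ makes the locus of $\mathbf{y}_0$ with nonempty fiber a dense constructible subset of $\mathbb{A}^k$, hence it contains the complement of a hypersurface of degree at most $\deg\overline{\pi(V)}\le\prod_i\deg F_i$, again single-exponential in $N$; Schwartz--Zippel again gives failure probability $\le 1/6$. Crucially, substituting integers of absolute value $\le B$ multiplies coefficient bit-lengths only by $O\big((\deg_{\mathbf{y}}F_i)\log B\big)=\mathrm{poly}(N)$, so the specialized $\HN$ instance has polynomial size.

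\emph{Step 3 (the protocol).} On input an $\HNP$ instance, Arthur samples $\mathbf{y}_0$ as in Step 2, forms the polynomial-size $\HN$ instance $\{F_i(\mathbf{y}_0,\mathbf{x})=0\}$, and runs on it a parallel-repeated (error $\le 1/6$) version of Koiran's $\AM$ protocol from Theorem~\ref{t:main-HN}. Folding Arthur's sampling of $\mathbf{y}_0$ into his single message yields a two-message Arthur--Merlin protocol; by Step 2 and the guarantees of Theorem~\ref{t:main-HN} the total error is $\le 1/3$, so $\HNP\in\AM$ assuming $\GRH$ (the hypothesis being inherited from Theorem~\ref{t:main-HN}). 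Equivalently, Steps 1--2 exhibit a randomized many-one reduction $\HNP\le_{\mathrm r}\HN$, and one concludes by closure of $\AM$ under such reductions.

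\emph{Main obstacle.} The delicate point is the quantitative bound in Step 2: to keep $\mathbf{y}_0$ (hence the reduced $\HN$ instance) of polynomial size, the ``bad'' locus of specializations --- $\{DH=0\}$ on the unsatisfiable side, and a hypersurface witnessing non-dominance on the satisfiable side --- must have degree at most single-exponential in $N$. This is exactly what the effective Nullstellensatz and Bézout-type degree bounds for Zariski closures of images of polynomial maps provide, and one must also verify (routine but essential) that evaluating at large integers inflates bit-sizes only polynomially. A secondary, routine point is that prepending public random coins to an $\AM$ protocol and flattening the composition keeps the problem in $\AM$; this also needs the error amplification indicated above.
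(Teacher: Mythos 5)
This theorem is not proved in the paper at all: it is imported as a black box from \cite[Thm~1]{A+25}, so there is no internal proof to compare against. Your proposal is in effect a reconstruction of the cited work's argument, and its architecture is the right one: clear denominators, substitute uniformly random integer parameter values of polynomial bit-length, control the two failure modes by (a) an effective-Nullstellensatz certificate $\sum F_iG_i=H(\mathbf{y})$ with $\deg H$ single-exponential on the unsatisfiable side and (b) a single-exponential-degree hypersurface containing the bad specializations on the satisfiable side, then run Koiran's protocol (Theorem~\ref{t:main-HN}) on the specialized instance and fold the sampling of $\mathbf{y}_0$ into Arthur's message. The size accounting (substituting integers of magnitude $2^{N^2}$ only inflates bit-lengths polynomially) and the error accounting (randomized many-one reduction composed with an $\AM$ protocol stays in $\AM$) are both correct.

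The one step whose stated justification does not hold up is the degree bound for the bad locus in the satisfiable direction. You bound it by $\deg\overline{\pi(V)}\le\prod_i\deg F_i$, but when $\pi$ is dominant $\overline{\pi(V)}=\mathbb{A}^k$ has degree $1$, and this says nothing about what you actually need: a hypersurface of controlled degree containing $\overline{\pi(V)}\setminus\pi(V)$, the constructible ``boundary'' of the image over which fibers may be empty. Such a single-exponential bound is true, but it comes from effective quantifier elimination / effective Chevalley (degree bounds for the polynomials describing the projection of a constructible set), not from the degree of the closure of the image; this is precisely where the technical content of \cite{A+25} sits, so it should not be waved through. A second, smaller point: after clearing denominators, $V(F_1,\dots,F_m)$ contains the spurious locus $\{D=0\}\times\mathbb{A}^s$, so on the unsatisfiable side you must (and do) also condition on $D(\mathbf{y}_0)\ne 0$; it is worth remarking that this locus maps into a proper closed subset and hence does not disturb the equivalence between satisfiability over $\overline{\mathbb{C}(\mathbf{y})}$ and dominance of $\pi$. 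With the elimination-theoretic degree bound supplied, the proof is complete.
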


\smallskip

See \cite{A+25} for extensive background of
\ts $\HNP$ \ts and other related work.

\subsection{A further generalization}\label{ss:hnpExt}
Our construction needs an additional refinement of $\HN$ and $\HNP$, which we call $\HNPE$ (\emph{Parametric Hilbert's Nullstellensatz with Exponents}).

We consider expressions $f_i$ of the form:
\begin{equation}\label{eq:f_HNPE}
    f_i=\sum_{\al=(\al_1,\ldots,\al_s)}p_{\al}x^\al,
\end{equation}
where $p_{\al}\in \mathbb{Q}(y_1,\ldots,y_k)[x_1,\ldots,x_s]$ and $\al_j\in\{0\}\cup\{z_1,\ldots,z_\ell\} $ for $j\in[s]$. That is, variables may appear in the exponents in $f_i$. Let ${\bf z}=(z_1,\ldots,z_\ell)$ be a vector of those exponent variables.
Let $f_i(\oa z)$ denote the resulting $f_i$ after evaluating  ${\bf z}$ at some $\oa z\in\mathbb{Z}^\ell$.

Fix a system $\{f_i=0\}_{i\in[m]}$ with $f_i$ as in~\eqref{eq:f_HNPE}, some $M\in\mathbb{Z}_{>0}$, row vectors $a_i\in\mathbb{Z}^{\ell}$, and constants $b_i \in\mathbb{Z}$ for indices $i\in I\subset\mathbb{Z}_{\geq 0}$. Suppose $|I|= O(n^c)$ for some $c\geq 0$.
The decision problem $\HNPE$
asks if there exists an integer evaluation $\oa z$ of $\bz$ such that
\begin{enumerate}
    \item $a_i\cdot{\oa z}\leq b_i$,
    \item $0\leq {\oa{z_j}} \leq M$
    for each $j\in [\ell]$, and
     \item $\{f_i(\oa z)=0 \}_{i\in [m]}$ has a solution over $\overline{\mathbb{C}(y_1,\ldots,y_k)}$.
\end{enumerate}

\begin{thm}\label{t:main-HNPE}
    \. $\HNPE$ \ts is in \ts $\AM$ \ts assuming \ts $\GRH$.
\end{thm}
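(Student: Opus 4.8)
The plan is to reduce $\HNPE$ to $\HNP$ (Theorem~\ref{t:main-HNP}) by having Arthur and Merlin first guess and certify the integer evaluation $\oa z$, and then invoke the $\HNP$ machinery on the specialized system $\{f_i(\oa z) = 0\}$. The key observation is that although the exponent variables $z_1,\dots,z_\ell$ can each be as large as $M$ (which may be exponential in the input size, hence $\oa z$ has polynomially many bits), once $\oa z$ is fixed the resulting polynomials $f_i(\oa z)$ are honest elements of $\mathbb{Q}(y_1,\dots,y_k)[x_1,\dots,x_s]$, albeit of potentially exponential degree. So the two things to check are: (1) the linear constraints (i) and (ii) on $\oa z$ can be verified in polynomial time, and (2) the specialized system, despite exponential degrees, still admits an $\AM$ protocol for its satisfiability over $\overline{\mathbb{C}(y_1,\dots,y_k)}$.

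First I would have Merlin send $\oa z \in \mathbb{Z}^\ell$ in binary. Since $0 \le \oa{z_j} \le M$, each coordinate has $O(\log M) = \poly$ many bits, so $\oa z$ has polynomial size. Arthur checks conditions (i), i.e.\ $a_i \cdot \oa z \le b_i$ for all $i \in I$ — there are $|I| = O(n^c)$ such inequalities, each a polynomial-size integer arithmetic check — and condition (ii). If any fails, Arthur rejects. Next, Arthur forms the system $\{f_i(\oa z) = 0\}_{i \in [m]}$ by substituting the guessed exponents; here the subtlety is size control. A monomial $x^\al$ with $\al_j \in \{0, z_1,\dots,z_\ell\}$ becomes a genuine monomial of degree at most $\ell M$ in each $x_j$, so the total degree of $f_i(\oa z)$ is at most $s \ell M$, which is exponential. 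However, the \emph{bit-size} of the exponent vector is still polynomial, so the polynomial $f_i(\oa z)$ has a polynomial-size \emph{sparse} (exponent-vector) encoding, with coefficients $p_\al \in \mathbb{Q}(y_1,\dots,y_k)[x]$ of polynomial size.

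The main obstacle, then, is that the $\HNP$ protocol of \cite{A+25} — and Koiran's original $\HN$ protocol — take as input polynomials in \emph{dense} encoding, or at least with polynomially bounded degree, whereas here we have exponentially large degrees. I would handle this by the standard device of introducing auxiliary variables to compute the high powers via repeated squaring: for each exponent value $e$ appearing (there are only polynomially many distinct ones, one per monomial per $f_i$), introduce new variables and equations of the form $u_0 = x_j$, $u_{r+1} = u_r^2$, and then express $x_j^e$ as a product of $O(\log M)$ of the $u_r$'s according to the binary expansion of $e$. This replaces each $f_i(\oa z)$ by a polynomial of polynomially bounded degree in polynomially many variables, together with $\poly$ many degree-$2$ defining equations, all of polynomial size, and the enlarged system is satisfiable over $\overline{\mathbb{C}(y_1,\dots,y_k)}$ if and only if the original specialized system is. Arthur then runs the $\HNP$ protocol of Theorem~\ref{t:main-HNP} on this enlarged system.

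Finally, I would assemble the protocol: Merlin's single message consists of $\oa z$ together with Merlin's message in the $\HNP$ protocol for the enlarged system; Arthur verifies the linear constraints, performs the (deterministic, polynomial-time) reduction to bounded degree via repeated squaring, and then executes Arthur's part of the $\HNP$ protocol. Completeness: if a valid $\oa z$ exists making condition (iii) hold, Merlin sends it and the $\HNP$ verifier accepts with high probability. Soundness: for any $\oa z$ violating (i) or (ii) Arthur rejects outright; for any $\oa z$ satisfying (i), (ii) but with $\{f_i(\oa z) = 0\}$ unsatisfiable over $\overline{\mathbb{C}(y_1,\dots,y_k)}$, the enlarged system is also unsatisfiable and the $\HNP$ soundness guarantee bounds the acceptance probability. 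Since $\AM$ protocols compose in this way (an $\NP$-type prefix followed by an $\AM$ subroutine stays in $\AM$), we conclude $\HNPE \in \AM$ assuming $\GRH$. The one genuinely delicate point to get right in the writeup is verifying that the repeated-squaring gadget preserves satisfiability over the algebraically closed field $\overline{\mathbb{C}(y_1,\dots,y_k)}$ — this is routine but must be stated carefully, since the auxiliary variables are forced to their intended values by the degree-$2$ equations on any solution.
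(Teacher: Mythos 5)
Your proposal follows the same route as the paper: Merlin guesses the integer evaluation $\oa z$ (which has polynomially many bits by constraint (ii)), Arthur verifies the $|I|=O(n^c)$ linear constraints (i) and the bounds (ii) deterministically, the specialized system $\{f_i(\oa z)=0\}$ is handed to the $\HNP$ protocol of \cite{A+25}, and the extra round of interaction is absorbed using Babai's round-collapse ($\AM[4]=\AM$ in the paper; your ``$\NP$-prefix followed by an $\AM$ subroutine'' composition is the same fact, though stated as $\mathsf{MAM}\subseteq\AM$ it would be cleaner, since Arthur's coins in the $\HNP$ subprotocol are nominally chosen for a specific instance). The one place you go beyond the paper is the repeated-squaring gadget: the paper simply asserts that satisfiability of $\mathcal{S}(\oa z)$ is an instance of $\HNP$ and does not address the possibility that substituting $\oa{z_j}\le M$ into the exponents produces polynomials of degree exponential in the input size, which would violate the size measure $\phi(g)=\deg(g)+s(g)$ under which Theorem~\ref{t:main-HNP} is applied. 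If $M$ is understood to be polynomially bounded (as it is in the intended application, where $M=\dSize$ and the system $\mathcal{S}$ already has size $O(n^5(n+\dSize))$), your gadget is unnecessary but harmless; if $M$ is given in binary, your gadget is exactly what is needed to make the reduction go through, and your observation that the degree-$2$ defining equations force the auxiliary variables to their intended values over $\overline{\mathbb{C}(y_1,\ldots,y_k)}$ is the correct justification. So the proposal is correct, and is if anything more careful than the paper's own argument on this encoding point.
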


\begin{proof}
  Consider the system $\mathcal{S}$ defined by $\{f_i=0\}_{i\in[m]}$, where $f_i$ are of the form~\eqref{eq:f_HNPE}.
   Our inputs are $\mathcal{S}$, $\{a_i\}_{i\in I}$, $\{b_i\}_{i\in I}$, and $M$.
 Note that we assume $\AM$ protocols satisfy perfect completeness. Restated, if a decision problem is in $\AM$ and the answer of an instance of the decision problem is ``yes", the $\AM$ protocol will output ``yes" with probability $1$ in that instance.

  First, query to determine a non-negative integer evaluation $\oa z$ of $\bf z$ such that the evaluated system, denoted $\mathcal{S}(\oa z)$, is satisfiable over
\ts $\overline{\mathbb{C}(y_1,\ldots,y_k)}$ while satisfying the added constraints (i) and (ii) for $\oa{z}$.
By (ii), $\oa z$ has polynomial size.

Now we appeal to the $\AM$ protocol of \cite{A+25} since the satisfiability of $\mathcal{S}(\oa z)$ is an instance of $\HNP$.
If the desired $\oa z$ exists, our query will return $\oa z\in\mathbb{Z}^{\ell}$ such that (i), (ii), and (iii) hold.
We can check (i) and (ii) for each $i\in I$ in polynomial time since $M$ bounds the sizes of $z_j$ and $|I|=O(n^c)$.
Then to check (iii), we use the $\AM$ protocol for $\HNP$ on $\mathcal{S}(\oa z)$ to output ``satisfiable" with probability $1$.

If no satisfiable $\oa z$ exists, our query may return some $\oa z\not\in\mathbb{Z}^{\ell}$ or some $\oa z\in\mathbb{Z}^{\ell}$ such that (i) or  (ii) fails.
We can detect errors of this form in polynomial time to output ``unsatisfiable" with probability $1$.  Alternatively our query may return some $\oa z\in\mathbb{Z}^{\ell}$ where (i) holds,  (ii) holds,
but $\mathcal{S}(\oa z)$ is unsatisfiable.
In this case,
the $\AM$ protocol  for $\HNP$  on $\mathcal{S}(\oa z)$ will output ``satisfiable" with probability at most  $\frac{1}{2}$.

Thus $\HNPE$ is in $\AM[4]$, the class of decision problems whose ``yes" answers can be decided in polynomial time using an {Arthur--Merlin protocol} with four messages.
Since $\AM=\AM[4]$, see \cite[Thm~2.1]{Bab85}, the result follows.
\end{proof}

\subsection{Reduction to $\HNPE$}\label{ss:intro-sketch}
We prove our Main~Theorem~\ref{t:main-AM} by showing that GW vanishing
is an instance of $\HNPE$.

\begin{lemma}[{\rm Main Lemma}{}] \label{l:GWV-HNP}
\. $\neg\GWV$ \. reduces to \. $\HNPE$. \.
\end{lemma}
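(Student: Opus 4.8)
The plan is to translate the geometric non-vanishing criterion of Lemma~\ref{lem:GWsetup} (applied after the Woodward reduction of Section~\ref{sec:partRed}) directly into an instance of $\HNPE$. Given an instance of $\neg\GWV$ on a partial flag variety, i.e.\ cosets $[u],[v],[w]\in S_n/S_\bfa$ and a degree $\bfd$, we first apply~\eqref{eq:partialReduction} to produce permutations $\widehat u,\widehat v,\widehat w w'\in S_n$ and a degree $\widehat\bfd\in\mathbb{Z}_{\geq 0}^{n-1}$ with $\hgw{[Y_{[u]}],[Y_{[v]}],[Y_{[w]}]}{}{\bfd}\ne 0$ iff $\hgw{[X_{\widehat u}],[X_{\widehat v}],[X_{\widehat w w'}]}{}{\widehat\bfd}\ne 0$; this step is polynomial-time computable since $\widehat\bfd$ is the unique sequence satisfying~\eqref{eq:lift} and the $d_{a_i}$-constraints, and $w'$ is read off from the equalities in~\eqref{eq:lift}. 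So it suffices to reduce non-vanishing on the complete flag variety $F(n)$ to $\HNPE$.

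Next I would encode conditions (i)--(iii) of Lemma~\ref{lem:GWsetup} as an $\HNPE$ instance. The integers $d_{h,i}$ (for $1\le i\le h\le n-1$), which satisfy $\sum_i d_{h,i}=\mathrm{d}_h$ and which we may take weakly increasing in $i$, become the exponent variables $\bz$; condition (i)--(ii) and the ordering are the linear constraints $a_i\cdot\oa z\le b_i$, and $M=\sum_h \mathrm{d}_h$ bounds each $z_j$, so $|I|$ is polynomial in $n$. The matrices $M_h(s,t)$ are matrices of homogeneous polynomials in $s,t$ whose $(i,j)$-entry has degree $d_{h,i}-d_{h+1,j}$ when this is $\ge 0$ and is $0$ otherwise; I introduce the coefficients of these polynomials as ordinary variables $\bx$. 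Crucially, the entry of $M_h$ is a polynomial of degree $d_{h,i}-d_{h+1,j}$ in $s,t$, and since $d_{h,i}$ are variables, the number of coefficient-variables and the monomials $s^a t^b$ appearing are controlled by the exponent variables — this is exactly the ``variables in the exponents'' feature that $\HNPE$ was designed to handle. The two flags worth of data: I take $s,t$ and the generic entries of $\bfU_\bullet,\bfV_\bullet,\bfW_\bullet$ as the parameters $\by$ (working over $\overline{\mathbb{C}(\by)}$ realizes ``generic'' flags). Condition (iii)(a) — that $M_h(s,t)$ has full rank $h$ over $\bbC(s,t)$ — I express by asserting the non-vanishing of some $h\times h$ minor; non-vanishing over a field is encoded in the usual Rabinowitsch way, introducing an auxiliary variable $\xi_h$ with $\xi_h\cdot(\text{minor})-1=0$, together with (if needed) a disjunction over which minor is nonzero, handled by running the construction once per choice and taking an OR of polynomially many $\HNPE$ instances (which is still in $\AM$). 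Condition (iii)(b) — that each $i\times j$ product $M_i(0,1)\cdots M_{n-1}(0,1)\bfU_j$ has rank $\le r_{\widehat u}(i,j)$, and similarly at $(1,1),(1,0)$ with $\bfV_j,\bfW_j$ — is a collection of vanishing conditions on the $(r+1)\times(r+1)$ minors of these product matrices, which are ordinary polynomial equations in $\bx,\by$ (the entries of the products are polynomial in the $M_h$-coefficients and the flag entries). Specializing $(s,t)$ to $(0,1),(1,1),(1,0)$ before multiplying keeps everything polynomial.

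Putting this together: the $\HNPE$ instance has exponent variables $\bz=(d_{h,i})$ with the linear constraints from (i)--(ii), parameter variables $\by=(s,t,\text{flag entries})$, ordinary variables $\bx=(\text{coefficients of }M_h, \text{Rabinowitsch }\xi_h)$, and equations $\{f_i=0\}$ consisting of: the Rabinowitsch equations for (iii)(a), and the minor-vanishing equations for (iii)(b). By Lemma~\ref{lem:GWsetup}, the original GW invariant is nonzero iff there is an integer evaluation $\oa z$ of the $d_{h,i}$ satisfying the linear constraints such that the resulting system has a solution over $\overline{\mathbb{C}(\by)}$ — which is precisely the $\HNPE$ predicate. (The finitely many minor-choices for (iii)(a) give a polynomial-size disjunction of $\HNPE$ instances; since $\AM$ is closed under polynomial disjunction, this is harmless, or one can fold the disjunction into a single instance by an extra selector mechanism.) Finally I verify the size bounds: $n$, the entries of $\bfd$, and the lengths $\ell(\widehat u)$, etc., are all polynomial in the input size; the exponents are bounded by $M=\sum\mathrm{d}_h$; the number of coefficient-variables is $O(\mathrm{poly}(n,M))$ but their \emph{number} is bounded symbolically via $\bz$, and the number of equations and their sizes $\phi(f_i)$ are polynomial, so the construction is a genuine polynomial-time many-one reduction, and $\neg\GWV$ reduces to $\HNPE$.

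The main obstacle I anticipate is making precise the claim that the degree-$(d_{h,i}-d_{h+1,j})$ homogeneous entries of $M_h$ can be written down as part of a \emph{fixed} $\HNPE$ instance despite the exponents being variable: one must check that the formalism of~\eqref{eq:f_HNPE}, where each monomial exponent is either $0$ or one of the $z_j$, is expressive enough to capture a polynomial of degree $d_{h,i}$ in two variables $s,t$ — this likely requires writing such a polynomial not as a single term $x^\al$ but via an auxiliary encoding (e.g.\ a ``generic polynomial of degree $z$'' built from a bounded template whose active range of monomials is cut out by the exponent variable), and verifying that the induced equations still have the required form and polynomial size. The secondary subtlety is confirming that condition (iii)(a) (full rank over $\bbC(s,t)$, equivalently after specializing $(s,t)$ to one of the three torus-fixed points, or generically) is correctly and uniformly expressible; I would handle this by observing that full rank over $\bbC(s,t)$ is equivalent to non-vanishing of \emph{some} maximal minor, a disjunction of polynomial-arithmetic-circuit-size conditions, each an $\HN$-type non-vanishing statement amenable to Rabinowitsch.
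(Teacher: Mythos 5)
Your overall strategy coincides with the paper's: apply the Woodward reduction~\eqref{eq:partialReduction}, treat the splitting degrees $d_{h,i}$ as the exponent variables $\bz$ constrained by the linear system coming from Lemma~\ref{lem:GWsetup}(ii), make the entries of the flag matrices the parameters so that satisfiability over $\overline{\bbC(\bal,\bbe,\bga)}$ captures genericity, and encode conditions (iii)(a)--(b) as polynomial equations. However, your encoding of the two rank conditions has a genuine size problem. For (iii)(a) you assert non-vanishing of an $h\times h$ minor of $M_h(s,t)$: written in the explicit coefficient form that $\HNPE$ (and the underlying $\HNP$ protocol) requires, such a determinant has $h!$ monomials in the coefficient variables, so the equation has exponential size $\phi$. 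Moreover, the disjunction ``some maximal minor of $M_h$ is nonzero, for every $h\in[n-1]$ simultaneously'' ranges over $\prod_{h=1}^{n-1}(h+1)=n!$ combinations, not polynomially many, so the proposed OR of instances is not a polynomial-time reduction (a single-instance fix such as $\sum_k \xi_k m_k=1$ exists, but still leaves the exponential-size minors). For (iii)(b) the situation is worse: the entries of $M_i(0,1)\cdots M_{n-1}(0,1)\bfU_j$, expanded as polynomials in the coefficient variables and parameters, have $n^{\Theta(n)}$ terms, and taking $(r+1)\times(r+1)$ minors of these products compounds the blow-up.

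The paper avoids both explosions by introducing auxiliary \emph{variable} matrices so that every equation is a product of at most three matrices of variables: full rank of $M_h$ is encoded as $B_h\cdot M_h(s,t)\cdot C_h={\sf Id}_h\cup 0$ (with $s,t$ existentially quantified variables, using that rank $h$ at one point of $\bbP^1$ implies generic full rank), and the rank upper bounds are encoded by the factorizations $R_{ij}(\sigma)=X_{ij}^\sigma\cdot Y_{ij}^\sigma$ through $r_\sigma(i,j)$ columns together with the recursions $R_{i-1,j}(\sigma)=M_{i-1}(\cdot)\cdot R_{ij}(\sigma)$ and $R_{n,j}(\sigma)=\bfU_j$ (resp.\ $\bfV_j,\bfW_j$), so no determinant or iterated product is ever expanded. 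To repair your argument you would need to replace the minor/iterated-product equations with some such polynomial-size certificate of rank. Your remaining concerns --- fitting the variable-degree entries of $M_h$ into the template~\eqref{eq:f_HNPE}, and justifying the passage from generic complex flags to the algebraic closure of the parameter field --- are correctly identified and are handled in the paper by bounding the number of coefficient variables by $O(\dSize\cdot n^3)$ (using that each $d_{h,i}\le \dSize$) and by the generic-fiber argument of Lemma~\ref{prop:genericRed}, respectively.
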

The proof of Main~Lemma~\ref{l:GWV-HNP} is given in~$\S$\ref{ss:mainPf}.

\medskip

\section{Main construction and proof} \label{sec:constrAndPf}
In this section we translate the conditions given in Lemma~\ref{lem:GWsetup} (ii) and (iii) into an explicit system of polynomial equations.

\subsection{The construction}\label{ss:constr}
 Let \(\bfa=(a_1,\dots, a_k)\in\mathbb{Z}^k\), where $1\leq a_1< \ldots <a_k\leq n$.
 Suppose that
$\bfd=(\mathrm{d}_1,\dots,\mathrm{d}_k)\in\mathbb{Z}_{\geq 0}^k$ for some $k\in[n]$.
Set $\dSize=\sum_{i\in [k]} { \mathrm{d}_i}$.

 First, we describe a system for the constraints to determine integers that satisfy Lemma~\ref{lem:GWsetup} (ii).
 Let ${\textbf{\textit{d}}}$ denote a set of variables
 $d_{i,j}$ for $1\leq j\leq i\leq n-1$ and variables $\widehat{d_{i}}$ for $i\in[n-1]$.
 Take \(a_0=0,\  a_{k+1}=n\), and \(\widehat{d}_{0}=\widehat{d}_{n}=0\).
Define $\mathcal {T}({\bfd})$ as the system formed by the constraints:
$$
\left\{ \ \aligned
&  \widehat{d}_{a_h}  \. = \. \mathrm{d}_h &\text{ for }\. h\in[k].\\
&  -1\leq-\widehat{d}_{i-1}+\widehat{d}_{i}+\widehat{d}_{j}-\widehat{d}_{j+1}\leq 0 &\text{ for }\. a_h<i\leq j<a_{h+1}, \ h\in[k].\\
&  \Big(\sum_{j=1}^{i}d_{i,j}\Big)-\widehat{d}_{i}=0 &\text{ for }\. i\in[n-1].\\
&  \widehat{d}_{i}\geq 0 & \text{ for }\. i\in[n-1].\\
&  d_{i,j}\geq 0, &\text{ for }\. 1\leq j\leq i\leq n-1.\\
&  d_{i,j}-d_{i,j+1}\leq 0 &\text{ for }\. 1\leq j\leq i\leq n-1, \text{ and } j<n-1.
\endaligned
\right.
$$
First, note that through a straightforward induction, the first, second, and fourth equations enforce  $0\leq \widehat{d}_{i} \leq \dSize$ for each $i\in[n-1]$.
Combining this with the third and fifth equations implies $0\leq d_{i,j} \leq \dSize$ for each $1\leq j\leq i \leq n-1$.
 Coefficients in $\mathcal {T}({\bfd})$ are all in $\{0,\pm1\}$ and constants in $\mathcal {T}({\bfd})$ are in $\{0,\pm1\}\cup \{ \mathrm{d}_1,\ldots, \mathrm{d}_k\}$. Since $k\leq n$, $\mathcal {T}({\bfd})$ requires $I=O(n^2)$ equations.

Let $\oa d$ be some integer evaluation of variables ${\textbf{\textit{d}}}$. Define $\oa{d_{n,j}}:=0$ for all $j\in[n-1]$.
Take $s$ and $t$ to be variables.
Now we build matrices $M_h(s,t)$ to address Lemma~\ref{lem:GWsetup} (iii).
For each $h\in[n-1]$,
define the $h\times (h+1)$ matrix of polynomials $M_{h}(s,t)$ with $i,j$-th entry:
\[p^{(h)}_{ij}(s,t)=\sum_{m=0}^{L}a_{ijm}^{(h)}s^mt^{L-m}, \text{ for } L=\oa{d_{h,i}}-\oa{d_{h+1,j}}.\]
Here $a_{ijm}^{(h)}$ are also treated as variables. Let ${\textbf{\textit{a}}}$ denote the set of variables $a_{ijm}^{(h)}$ where $h\in[n-1]$
and  $0\leq m\leq \oa{d_{h,i}}-\oa{d_{h+1,j}}$.
Then $|{\textbf{\textit{a}}}|=O(n\cdot n^2\cdot \dSize)=O(\dSize \cdot n^3)$.
For succinctness, we may write $M_h=M_h(s,t)$. Note that entries $p^{(h)}_{ij}(s,t)$ in $M_h$ have size $O(\dSize)$.

We now construct additional matrices for Lemma~\ref{lem:GWsetup} (iii)(a).
For each $h\in[n-1]$ define ${B_{h}}=(b_{ij}^{(h)})$
to be an $h \times h$ matrix of variables. Similarly define ${C_{h}}=(c_{ij}^{(h)})$
to be an $(h+1)\times (h+1)$ matrix of variables.
Define the sets of variables \ts ${\textbf{\textit{b}}}=\{b_{ij}^{(h)}\}$ where $1\leq i,j\leq h\leq n$
and \ts ${\textbf{\textit{c}}}=\{c_{ij}^{(h)}\}$ where $1\leq i,j\leq h+1\leq n+1$.
 Then $|{\textbf{\textit{b}}}|=O(n^3)$ and $|{\textbf{\textit{c}}}|=O(n^3)$.

Take $\bfU=(\al_{ij}), \bfV=(\be_{ij}), \bfW=(\ga_{ij})$ to be $n\times n$ matrices of parameters. Let $\bal, \bbe, \bga$ denote the sets of parameters therein, respectively.
Let $\bfU_h$ denote the submatrix of $\bfU$ formed by its first $h$ columns. Define $\bfV_h$ and $\bfW_h$ analogously.

Now we build matrices to force the constraints in Lemma~\ref{lem:GWsetup} (iii)(b).
For each $i,j\in[n]$ and $\sigma\in\{u,v,w\}$ construct the $i\times r_{\sigma}(i,j)$ matrix of variables $X_{ij}^\sigma=(X_{i,j,p,q}^\sigma)$. Similarly, form the $r_{\sigma}(i,j)\times j$ matrix  of variables $Y_{ij}^\sigma=(y_{i,j,p,q}^\sigma)$. Let $\bx$ and $\by$ denote the sets of variables appearing in these matrices, respectively. Then $|\bx|=|\by|=O(n^4)$ since $r_\sigma(i,j)\leq n$.
For ease of notation, denote the $i\times j$ matrix
\[R_{ij}({\sigma}):=X_{ij}^\sigma\cdot Y_{ij}^\sigma,\ts \text{ for } \ts  i,j\in[n] \ts\text{ and } \ts\sigma\in\{u,v,w\}. \]
Note entries in $R_{ij}(\sigma)$ have size $O(n^2)$.

Let ${\sf Id}_{h}\cup 0$ denote the $h\times(h+1)$ matrix whose leftmost $h\times h$ submatrix is ${\sf Id}_{h}$ and last column is the $0$ vector.

Let $\mathcal {S}(u,v,w,{\oa d})$ be the system formed by the constraints:
$$
\left\{ \ \aligned
& B_h\cdot  M_h(s,t)\cdot C_h\. = \. {\sf Id}_{h}\cup 0  &\text{ for }\. h\in[n-1].\\
&  R_{n,j}(u) \. = \.  \bfU_j &\text{ for }\. j\in[n].\\
&  R_{n,j}(v)  \. = \.  \bfV_j &\text{ for }\. j\in[n].\\
&  R_{n,j}(w) \. = \.  \bfW_j &\text{ for }\. j\in[n].\\
&  R_{i-1,j}(u)  \. = \.M_{i-1}(0,1)\cdot R_{ij}(u) &\text{ for }\. i,j\in[n].\\
&  R_{i-1,j}(v)  \. = \.M_{i-1}(1,1)\cdot R_{ij}(v) &\text{ for }\. i,j\in[n].\\
&  R_{i-1,j}(w)  \. = \.M_{i-1}(1,0)\cdot R_{ij}(w) &\text{ for }\. i,j\in[n].
\endaligned
\right.
$$
Here $\mathcal {S}(u,v,w,{\oa d})$ uses variables ${\textbf{\textit{a}}}\cup {\textbf{\textit{b}}}\cup {\textbf{\textit{c}}}\cup \bx\cup\by\cup \bz\cup\{s,t\}$ and parameters $\bal\cup \bbe\cup \bga$.
We see the first constraint has size $O(n\cdot n^2
\cdot \dSize \cdot n^2)=O(\dSize\cdot n^5)$ since entries in $M_h$ have size $O(\dSize)$. The next three equations have size  $O(n\cdot n^2)=O(n^3)$.
The final three constraints have size $O(n^2\cdot n^2\cdot n^2)=O(n^6)$ since entries in $M_h$ are linear for evaluated $s,t\in\{0,1\}$.
Thus $\mathcal {S}(u,v,w,{\oa d})$ has size $O(n^5(n+\dSize))$.

\begin{prop}\label{prop:GWsystem}
Consider $u,v,w\in S_n$ and ${\bfd}\in\zz_{\geq 0}^{k}$ such that~\eqref{eqn:dimcount} holds. Then we have $\hgw{[Y_{[u]}],[Y_{[v]}],[Y_{[w]}]}{}{\bfd}\neq 0$ if and only if for a solution ${\oa d}$ to $\mathcal {T}({\bfd})$ and a generic choice of evaluations $\oa \al, \oa \be, \oa \ga$ of $\bal, \bbe, \bga$, the system $\mathcal {S}(\widehat{u},\widehat{v},\widehat{w}w',{\oa d})$
has a solution over \(\mathbb{C}\),  where $(\widehat{u},\widehat{v},\widehat{w}w')$ are as in~\eqref{eq:partialReduction}.
\end{prop}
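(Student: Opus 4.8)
The plan is to show that Proposition~\ref{prop:GWsystem} is essentially a faithful transcription of Lemma~\ref{lem:GWsetup} (via the reduction of $\S$\ref{sec:partRed}) into polynomial equations, so the work is in checking that the system $\mathcal{S}(\widehat{u},\widehat{v},\widehat{w}w',{\oa d})$ is satisfiable over $\mathbb{C}$ precisely when conditions (ii) and (iii) of Lemma~\ref{lem:GWsetup} hold for the data $(\widehat u,\widehat v,\widehat w w',\widehat{\bfd})$. First I would invoke \eqref{eq:partialReduction} to replace the partial-flag GW invariant by the complete-flag invariant $\hgw{[X_{\widehat u}],[X_{\widehat v}],[X_{\widehat w w'}]}{}{\widehat{\bfd}}$, noting that $\widehat{\bfd}$ is exactly the sequence $(\widehat d_1,\dots,\widehat d_{n-1})$ whose existence and defining inequalities \eqref{eq:lift} are encoded by the first two families of constraints in $\mathcal{T}(\bfd)$. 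Since \eqref{eqn:dimcount} is assumed, Lemma~\ref{lem:GWsetup} tells us the invariant is nonzero for generic reference flags iff conditions (ii) and (iii) are satisfiable; so it remains to match (ii)$\leftrightarrow$ existence of a solution $\oa d$ to $\mathcal{T}(\bfd)$, and (iii)$\leftrightarrow$ satisfiability of $\mathcal{S}$ over $\mathbb{C}$ for generic parameter evaluations.

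Next I would unpack the three parts of condition (iii) against the three groups of equations in $\mathcal{S}$. The remaining equations of $\mathcal{T}(\bfd)$ (the $\sum_j d_{i,j}=\widehat d_i$ equations together with $d_{i,j}\ge 0$ and the monotonicity inequalities) encode the decomposition $A_h\cong\bigoplus_{i=1}^h\mathcal{O}(-d_{h,i})$ with $\sum_i d_{h,i}=\mathrm{d}_h$ from the proof of Lemma~\ref{lem:GWsetup}, using the ``without loss of generality'' remark after that lemma to impose $d_{i,1}\le\cdots\le d_{i,i}$. Given such $\oa d$, the matrix $M_h(s,t)$ with entries $p^{(h)}_{ij}$ homogeneous of degree $\oa{d_{h,i}}-\oa{d_{h+1,j}}$ (or zero when that degree is negative, since no monomial $s^m t^{L-m}$ with $L<0$ appears) is exactly the general form of a map $A_h\to A_{h+1}$ of graded sheaves as in the lemma. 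For (iii)(a), full rank of $M_h$ over $\mathbb{C}(s,t)$ is a rank-$h$ condition on an $h\times(h+1)$ matrix; I would argue this is equivalent to the existence of an invertible $B_h$ over $\mathbb{C}(s,t)$ and invertible $C_h$ over $\mathbb{C}(s,t)$ with $B_h M_h C_h={\sf Id}_h\cup 0$ — but since we only want satisfiability of polynomial equations over $\mathbb{C}$, I must be careful: $B_h,C_h$ have \emph{scalar} (parameter-free, but $s,t$-free) entries, so one should check that full rank of $M_h$ over $\mathbb{C}(s,t)$ is equivalent to the existence of \emph{constant} matrices $B_h,C_h$ with this identity, which holds because a full-rank $h\times(h+1)$ matrix over a field admits constant elementary-type reductions... actually here the entries of $B_h,C_h$ are pure scalars while $M_h$ involves $s,t$, so the correct statement is that $M_h$ has generic rank $h$ iff for generic/suitable scalar $B_h,C_h$ the product equals ${\sf Id}_h\cup 0$ — I would instead phrase (iii)(a) as: $M_h$ is full rank over $\mathbb{C}(s,t)$ iff the equation $B_h M_h(s,t) C_h = {\sf Id}_h\cup 0$ (an identity of matrices over $\mathbb{C}[s,t]$, hence a system of scalar polynomial equations in ${\bf a},{\bf b},{\bf c},s,t$) has a solution. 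This requires a short lemma, and will be one delicate point.

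For (iii)(b) I would set $R_{ij}(\sigma)=X^\sigma_{ij} Y^\sigma_{ij}$ and use the standard fact that an $i\times j$ matrix has rank at most $r$ over $\mathbb{C}$ iff it factors as a product of an $i\times r$ matrix and an $r\times j$ matrix; thus ``$R_{ij}(\sigma)$ is a solvable stand-in for a rank-$\le r_\sigma(i,j)$ matrix'' exactly as required. The equations $R_{n,j}(u)=\bfU_j$ etc.\ initialize with the parameter flags, and the recursions $R_{i-1,j}(u)=M_{i-1}(0,1)R_{ij}(u)$ (and the $(1,1)$, $(1,0)$ analogues for $v,w$) propagate, so that $R_{ij}(\sigma)$ computes the composite $M_i(\epsilon)\cdots M_{n-1}(\epsilon)\bfU_j$ from Lemma~\ref{lem:GWsetup}(iii)(b); hence its rank being $\le r_\sigma(i,j)$ is equivalent to the existence of the factorization, i.e.\ to solvability of the $R$-equations. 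I would then assemble: a $\mathbb{C}$-solution of $\mathcal{S}(\widehat u,\widehat v,\widehat w w',\oa d)$ yields matrices $M_h$ satisfying (iii)(a),(b), hence by Lemma~\ref{lem:GWsetup} the invariant is nonzero; conversely matrices realizing (iii) can be filled out with appropriate $B_h,C_h,X^\sigma_{ij},Y^\sigma_{ij}$ to solve $\mathcal{S}$. The ``generic choice of $\oa\al,\oa\be,\oa\ga$'' matches the ``generic $U_\bullet,V_\bullet,W_\bullet$'' in Lemma~\ref{lem:GWsetup}; here I would note that genericity is a nonempty Zariski-open condition on the parameters, so ``for a generic evaluation the system has a $\mathbb{C}$-solution'' is the right quantifier, and that the reference to \eqref{eqn:dimcount} guarantees we are in the balanced-dimension situation where Lemma~\ref{lem:GWsetup} applies verbatim. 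The main obstacle I anticipate is precisely the (iii)(a) step: justifying that the single matrix identity $B_h M_h C_h = {\sf Id}_h\cup 0$ with scalar $B_h, C_h$ is equivalent to generic full rank of $M_h(s,t)$ — the subtlety being that $B_h,C_h$ cannot depend on $s,t$, so one needs that the $s,t$-dependence is ``absorbed'' correctly; I would resolve this by observing that the desired identity, read off coefficient-by-coefficient in $s,t$, is a polynomial system whose solvability over $\mathbb{C}$ is what Lemma~\ref{lem:GWsetup}(iii)(a) actually demands, since the injectivity of $A_h\hookrightarrow A_{h+1}$ is a statement at the generic point of $\mathbb{P}^1$ and the $B_h,C_h$ are allowed to be rational in $s,t$ in the sheaf picture but can be cleared of denominators and re-encoded with the auxiliary variables already present. (If clearing denominators forces $B_h,C_h$ to carry $s,t$, one enlarges their entries to polynomials in $s,t$ — a routine modification of the construction that does not change its polynomial size.)
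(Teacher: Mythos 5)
Your overall strategy is the paper's: reduce to the complete flag case via \eqref{eq:partialReduction}, use \eqref{eqn:dimcount} to place yourself in the setting of Lemma~\ref{lem:GWsetup}, match $\mathcal{T}(\bfd)$ with condition (ii), and match the two blocks of $\mathcal{S}(\widehat u,\widehat v,\widehat w w',{\oa d})$ with conditions (iii)(a) and (iii)(b), the factorization $R_{ij}(\sigma)=X^\sigma_{ij}Y^\sigma_{ij}$ encoding ``rank at most $r_\sigma(i,j)$'' and the recursions propagating the products $M_i\cdots M_{n-1}\bfU_j$, etc. All of that agrees with the paper's proof.

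The genuine gap is at step (iii)(a), exactly where you flag difficulty, and neither of your proposed resolutions works. The intended reading is that $s$ and $t$ are themselves \emph{variables} of $\mathcal{S}$ (they appear in its variable list), so the constraint $B_h\cdot M_h(s,t)\cdot C_h={\sf Id}_h\cup 0$ asks for a single complex point $(s_0,t_0)$ together with constant matrices $B_h,C_h$ satisfying $B_hM_h(s_0,t_0)C_h={\sf Id}_h\cup 0$. This is equivalent to (iii)(a) because an $h\times(h+1)$ matrix over $\bbC[s,t]$ has rank $h$ over $\bbC(s,t)$ iff some $h\times h$ minor is a nonzero polynomial, iff the matrix has rank $h$ at some (equivalently, at a generic) point of $\bbC^2$; and over $\bbC$ any rank-$h$ matrix of that shape is reduced to ${\sf Id}_h\cup 0$ by constant row and column operations. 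By contrast, your ``coefficient-by-coefficient'' reading --- requiring $B_hM_h(s,t)C_h={\sf Id}_h\cup 0$ as an identity in $\bbC[s,t]$ with $s,t$-independent $B_h,C_h$ --- is strictly stronger than (iii)(a) and is generically unsatisfiable: for the $1\times 2$ matrix $M_1(s,t)=(s\ \ t)$ (so $d_{1,1}=1$, $d_{2,j}=0$), which is full rank over $\bbC(s,t)$, the product $B_1M_1C_1$ has entries homogeneous of degree $1$ and can never identically equal $(1\ \ 0)$. Your fallback of letting $B_h,C_h$ carry polynomial entries in $s,t$ is also not routine: $\bbC[s,t]$ is not a PID, and clearing denominators from the $\bbC(s,t)$-level reduction only yields $B_hM_hC_h=f(s,t)\cdot({\sf Id}_h\cup 0)$ for some nonzero polynomial $f$, not ${\sf Id}_h\cup 0$ itself. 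So the equivalence you need for (iii)(a) is false as you state it; the fix is simply to solve for $s,t$ as unknowns, which is what the construction already provides.
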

\begin{proof}
We examine the conditions comprising $\mathcal{T}(\bfd)$ and $\mathcal {S}(\widehat{u},\widehat{v},\widehat{w}w',{\oa d})$ and show these translate those in Lemma~\ref{lem:GWsetup}.
First, $\mathcal{T}(\bfd)$ directly  gives (ii) using~\eqref{eq:lift}. Using~\eqref{eq:partialReduction}, the problem of determining $ \hgw{[Y_{[u]}],[Y_{[v]}],[Y_{[w]}]}{}{\bfd}\neq 0$ reduces to $\hgw{[X_{\widehat{u}}],[X_{\widehat{v}}],[X_{\widehat{w}w'}]}{}{\widehat{\bfd}}\neq 0$.

We now consider $\mathcal {S}(\widehat{u},\widehat{v},\widehat{w}w',{\oa d})$.
Note that $M_{h}$ is rank $h$ if and only if $B_h\cdot M_{h}(s,t)\cdot C_h={\sf Id}_h\cup 0$ for matrices $B_h,C_h$.
Further note $M_{h}(s,t)$ is rank $h$ for general $s,t$ if and only if $M_{h}(s,t)$ is rank $h$ for some choice of $s,t$.
 Thus the satisfiability of this condition characterizes Lemma~\ref{lem:GWsetup} (iii)(a).

Together, the remaining equations ensure the following:
\begin{itemize}
    \item[$\diamond$] $R_{ij}({\widehat{u}})=M_i(0,1)\dots M_{n-1}(0,1)\bfU_j$,
    \item[$\diamond$] $R_{ij}({\widehat{v}})=M_i(1,1)\dots M_{n-1}(1,1)\bfV_j$, and
    \item[$\diamond$] $R_{ij}({\widehat{w}w'})=M_i(1,0)\dots M_{n-1}(1,0)\bfW_j$
\end{itemize}
The definition of $R_{ij}({\sigma})$ as a product of an $i\times r_{\sigma}(i,j)$ and an $r_{\sigma}(i,j)\times j$ matrix ensures the rank of $R_{ij}({\sigma})$ is at most $r_{\sigma}(i,j)$, where $\sigma\in S_n$. Thus these last six conditions force Lemma~\ref{lem:GWsetup} (iii)(b).
\end{proof}

\subsection{Equivalence of satisfiability}\label{ss:geom}
We outline the following argument for completeness.
\begin{lemma}\label{prop:genericRed}
Consider $u,v,w\in S_n$ and ${\oa d}\in \mathbb{Z}^{(n-1)^2}$.
Then the following are equivalent:
\begin{enumerate}
    \item  $\mathcal {S}(u,v,w,{\oa d})$ has a solution over $\overline{\mathbb{C}(\bal, \bbe, \bga)}$.
    \item $\mathcal {S}(u,v,w,{\oa d})$  has a solution over  $\mathbb{C}$ for
a generic choice of evaluations $\oa \al, \oa \be, \oa \ga$ of $\bal, \bbe, \bga$.
\end{enumerate}
\end{lemma}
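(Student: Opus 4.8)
The plan is to run a standard ``generic fibre'' argument. Collect all the parameters in $\bal\cup\bbe\cup\bga$ into coordinates on an affine space $\mathbb{A}^N$ over $\mathbb{C}$, and all the remaining variables of $\mathcal{S}(u,v,w,\oa d)$ into coordinates on $\mathbb{A}^M$. Since every polynomial appearing in $\mathcal{S}(u,v,w,\oa d)$ has coefficients in $\mathbb{C}[\bal,\bbe,\bga]$, these polynomials cut out a closed subscheme $Z\subseteq\mathbb{A}^N\times\mathbb{A}^M$; let $\pi\colon Z\to\mathbb{A}^N$ be the projection to the parameter space, so that the fibre $\pi^{-1}(x)$ over a point $x$ (over any extension field of $\mathbb{C}$) is exactly the solution scheme of the system after specializing the parameters to $x$. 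The key external input is Chevalley's theorem: the image $\pi(Z)$ is a constructible subset of $\mathbb{A}^N$. Because $\mathbb{A}^N$ is irreducible, a constructible subset is Zariski-dense if and only if it contains the generic point $\eta$, and in that case it contains a nonempty Zariski-open subset $U$.

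Next I would translate statement~(i). The fibre $Z_\eta=\pi^{-1}(\eta)$ is a scheme of finite type over the residue field $\mathbb{C}(\bal,\bbe,\bga)$, so by the weak Nullstellensatz it is nonempty exactly when it has a point over the algebraic closure $\overline{\mathbb{C}(\bal,\bbe,\bga)}$ --- which is precisely condition~(i). Thus~(i) holds if and only if $\eta\in\pi(Z)$, i.e.\ if and only if $\pi(Z)$ is Zariski-dense, i.e.\ if and only if $\pi(Z)$ contains a nonempty Zariski-open $U\subseteq\mathbb{A}^N$. Finally, for the equivalence with~(ii): if such a $U$ exists, then for every $\mathbb{C}$-point $(\oa\al,\oa\be,\oa\ga)\in U$ the fibre $\pi^{-1}(\oa\al,\oa\be,\oa\ga)$ is a nonempty scheme of finite type over $\mathbb{C}$, hence has a $\mathbb{C}$-point by the Nullstellensatz, which says exactly that the specialized system is solvable over $\mathbb{C}$; conversely, if~(i) fails then $\pi(Z)$ lies in a proper closed subset of $\mathbb{A}^N$, and no nonempty Zariski-open subset is contained in such a set, so there is no dense open locus of evaluations making the system solvable over $\mathbb{C}$. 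This gives both implications of~(i)$\Leftrightarrow$(ii).

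I do not expect a serious obstacle here --- this is a routine spreading-out argument --- but two points need care. First, ``generic'' must be read as ``on a dense Zariski-open subset,'' which is only meaningful because the parameter space $\mathbb{A}^N$ is irreducible, and it is the combination of irreducibility with constructibility (Chevalley) that turns ``generic solvability'' into the single membership statement $\eta\in\pi(Z)$. Second, one must pass carefully between scheme-theoretic nonemptiness and the existence of honest points over $\mathbb{C}$ and over $\overline{\mathbb{C}(\bal,\bbe,\bga)}$, which is where the (weak) Nullstellensatz is invoked at two different base fields. As an alternative to Chevalley one can argue the easy direction purely with Nullstellensatz certificates: if~(i) fails, clearing denominators in a certificate $\sum_i f_i g_i=1$ over $\mathbb{C}(\bal,\bbe,\bga)$ yields $\sum_i f_i\widetilde g_i=D(\bal,\bbe,\bga)$ with $\widetilde g_i\in\mathbb{C}[\bal,\bbe,\bga][\mathrm{vars}]$ and $D\neq 0$, so the system is unsolvable over $\mathbb{C}$ whenever $D(\oa\al,\oa\be,\oa\ga)\neq 0$; the reverse implication is the one genuinely needing constructibility of the image (or a direct elimination-theory argument).
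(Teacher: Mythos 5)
Your argument is correct and follows essentially the same route as the paper: both view $\mathcal{S}(u,v,w,\oa d)$ as cutting out a finite-type scheme $Z$ over the parameter space $\mathrm{Spec}\,\mathbb{C}[\bal,\bbe,\bga]$, identify (i) with nonemptiness of the (geometric) generic fiber via the weak Nullstellensatz and base change, and then pass between the generic fiber and the general fiber. The only difference is presentational: where you spell out Chevalley's theorem and the density of constructible sets containing the generic point, the paper simply cites the corresponding Stacks project lemmas.
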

\begin{proof}
The system $\mathcal {S}(u,v,w,{\oa d})$ defines a finite-type affine variety \(Z\) over \(W\coloneqq\mathrm{Spec}\bbC[\bal,\bbe,\bga]\). Statement (i) is equivalent to the statement that the geometric generic fiber
    \[
     Z\times_W \mathrm{Spec}\,\overline{\mathbb{C}(\bal, \bbe, \bga)}
    \]
    is non-empty, which is also equivalent to the statement that the generic fiber
    \[
    Z\times_W \mathrm{Spec}\,{\mathbb{C}(\bal, \bbe, \bga)}
    \]
    is non-empty, as the former is a basechange of the latter. Using textbook facts, see \cite[Lemmas~37.24.1 and ~37.24.2]{Sta}, the last statement is equivalent to the statement that the general fiber of \(Z\to W\) is non-empty. This is equivalent to (ii).
\end{proof}

\subsection{Proof of Lemma~\ref{l:GWV-HNP}} \label{ss:mainPf}
We have shown $\mathcal {S}(\widehat{u},\widehat{v},\widehat{w}w',{\oa d})$ has size $O(n^5(n+\dSize))$. We see $\mathcal{T}(\bfd)$ has $O(n^2)$ equations, with its solutions bounded between $0$ and $D$.
Thus, deciding if $\mathcal {S}(\widehat{u},\widehat{v},\widehat{w}w',{\oa d})$ is satisfiable over $\overline{\mathbb{C}(\bal, \bbe, \bga)}$ for some solution ${\oa d}$ to $\mathcal{T}(\bfd)$ is an instance of $\HNPE$.

We assume~\eqref{eqn:dimcount} holds.
By Proposition~\ref{prop:GWsystem}, $\mathcal {S}(\widehat{u},\widehat{v},\widehat{w}w',{\oa d})$ is satisfiable for a generic choice of evaluations $\oa \al, \oa \be, \oa \ga$
for some solution ${\oa d}$ to $\mathcal{T}(\bfd)$ if and only if $\hgw{[Y_{[u]}],[Y_{[v]}],[Y_{[w]}]}{}{\bfd}\neq 0$.
Combining this with Lemma~\ref{prop:genericRed}, $\mathcal {S}(\widehat{u},\widehat{v},\widehat{w}w',{\oa d})$ is satisfiable over $\overline{\mathbb{C}(\bal, \bbe, \bga)}$ for some for some solution ${\oa d}$ to $\mathcal{T}(\bfd)$ if and only if $\hgw{[Y_{[u]}],[Y_{[v]}],[Y_{[w]}]}{}{\bfd}\neq 0$. Thus
$\neg\GWV$ \. reduces to \. $\HNPE$. \qed

\medskip

\section{Final Remarks}\label{sec:finRem}

\subsection{\(m\)-pointed fixed-domain GW invariants}\label{ss:finrem-m}
Using a similar argument as in~\cite{PR25} for $m$-fold intersections of Schubert varieties, it is straightforward to apply our method to genus zero, \(m\)-pointed fixed-domain GW invariants. These invariants appear as coefficients when one expands a product of \(m-1\) Schubert classes into a linear combination of Schubert classes in the small quantum cohomology ring of \(F(\mathbf{a},n)\), see \cite[$\S$4]{CF99b}.

\subsection{Extending to other Lie types}

One may be able to replace partial flag varieties with generalized (partial) flag varieties of classical Lie types, by considering the corresponding analogs of hyperquot schemes, see \cite{CCH21,CCH22,Sin24}.

\subsection{Grassmannian case}
Specializing to the Grassmannian,  by \cite{AW98} and \cite{Bel01}, the non-vanishing of Gromov--Witten invariants controls the multiplicative Horn problem for ${\sf SU}(n)$, i.e. how the equation $A\cdot B=C$ constrains the singular values of $A,B,C\in {\sf SU}(n)$. In this setting, GW invariants can be computed via classical Schubert calculus on a two-step flag variety \cite{BKT03}. The puzzle rule of \cite{BKPT16} ensures that deciding the non-vanishing of GW invariants is in ${\sf NP}$.
As noted in~\cite[Question~2]{ARY19}, it is an open problem to prove the vanishing is in ${\sf P}$. We note that in this case, GW invariants have the saturation property, as proven by~\cite{Bel08}, but it is unknown if this fact may be leveraged in analogy with~\cite{DeLM06,MNS12} to prove the vanishing problem is in ${\sf P}$.

\subsection{Complexity implications}
It is worth noting that the Main Theorem~\ref{t:main-AM} shows that the
vanishing of GW invariants is rather low in the polynomial hierarchy.
In particular, the result of Boppana, H{\aa}stad and Zachos \cite[Thm~2.3]{BHZ87}
implies that \ts $\GWV$ \ts is not $\NP$-hard, assuming the $\GRH$ and the
polynomial hierarchy
does not collapse to the second level:  $\Sigmap_2 \ne \PH$.  Additionally,
we have that \ts $\GWV \in \coNP$ \ts assuming the $\GRH$ and the
strong derandomization assumption of Impagliazzo and Wigderson \cite{IW97}.

Finally, we note that in \cite{PR25} the first two authors proved
a stronger inclusion for the $\bfd=0$ case, that the vanishing of Schubert structure constants
is in \ts $\AM\cap\coAM$ assuming the $\GRH$.
Unfortunately the tools in that paper do not extend to Gromov--Witten
invariants.

\vskip.4cm
{\small
\subsection*{Acknowledgements}
We thank Anders Buch, Sergei Gukov, Leonardo Mihalcea and Ravi Vakil
for interesting discussions and helpful comments.
This project began when the second and third authors were participants
at the Workshop on Combinatorics of Enumerative Geometry at the
Institute of Advanced Study in Princeton,~NJ.  We are grateful for the hospitality.
The first author was partially supported by the NSF grant CCF-2302173.
The second author was partially supported by the NSF MSPRF grant DMS-2302279.
}

\newpage


{\footnotesize

}

\end{document}